\newcommand{\C}{{\mathbb{C}}}
\newcommand{\Q}{{\mathbb{Q}}}
\newcommand{\R}{{\mathbb{R}}}
\newcommand{\Z}{{\mathbb{Z}}}
\newcommand{\cont}{\mathrm{cont}}
\newcommand{\ddet}{\mathrm{det}}
\newcommand{\ev}{\mathrm{ev}}
\newcommand{\of}{\overline{f}}
\newcommand{\id}{\mathrm{id}}
\newcommand{\End}{\mathrm{End}\,}
\newcommand{\Hom}{\mathrm{Hom}}
\newcommand{\Imm}{\mathrm{Im}\,}
\newcommand{\Ker}{\mathrm{Ker}\,}
\newcommand{\hGamma}{\hat{\Gamma}}
\newcommand{\Bh}{\mathcal{B}}
\newcommand{\Fh}{{\mathcal F}}
\newcommand{\Nh}{{\cal N}}
\newcommand{\Rh}{{\mathcal R}}
\newcommand{\Wh}{\mathcal{W}}
\newcommand{\hA}{\hat{A}}
\newcommand{\tf}{\tilde{f}}
\newcommand{\iso}{\stackrel{\sim}{\longrightarrow}}
\newcommand{\tei}{\, | \,}
\newcommand{\tgamma}{\tilde{\gamma}}
\newcommand{\tGamma}{\tilde{\Gamma}}
\newcommand{\tS}{\tilde{S}}
\newcommand{\tWh}{\tilde{\Wh}}
\newcommand{\verk}{\mbox{\scriptsize $\,\circ\,$}}
\newcommand{\btu}{\bigtriangleup}
\newtheorem{theorem}{Theorem}
\newtheorem{lemma}[theorem]{Lemma}
\newtheorem{prop}[theorem]{Proposition}
\newtheorem{cor}[theorem]{Corollary}
\newtheorem{example}[theorem]{Example}
\newtheorem{remarks}[theorem]{Remarks}
\newtheorem{question}[theorem]{Question}
\newenvironment{rem}{\noindent {\bf Remark}}{}
\newenvironment{rems}{\noindent {\bf Remarks}}{}
\newenvironment{proofof}{\noindent {\bf Proof of}}{\mbox{}\hfill$\Box$}
\newenvironment{proof}{\noindent {\bf Proof}}{\mbox{}\hspace*{\fill} $\Box$}
\begin{document}
\title{Mahler measures and Fuglede--Kadison determinants}
\author{Christopher Deninger}
\date{\ }
\maketitle
\thispagestyle{empty}

\section{Introduction}
\label{sec:1}

For an essentially bounded complex valued measurable function $P$ on the real $d$-torus $T^d = S^1 \times \ldots \times S^1$ the Mahler measure is defined by the formula $M (P) = \exp m (P) \ge 0$ where $m (P)$ is the integral
\[
m (P) = \int_{T^d} \log |P| \, d\mu \quad \mbox{in} \; \R \cup \{ - \infty \} \; .
\]
Here $\mu$ is the Haar probability measure on $T^d$. If $P$ is a Laurent polynomial on $T^d$ for example, it is known that $\log |P|$ is integrable on $T^d$ unless $P = 0$, so that we have $M (P) > 0$ for $P \neq 0$ and $M (P) = 0$ for $P = 0$.

The Mahler measure appears in many branches of mathematics. It is especially interesting for polynomials with coefficients in $\Z$. If $\alpha$ is an algebraic integer with monic minimal polynomial $P$ over $\Q$ then $m (P)$ is the normalized Weil height of $\alpha$. This follows from an application of Jensen's formula. For polynomials in severable variables there is no closed formula evaluating $m (P)$ but sometimes $m (P)$ can be expressed in terms of special values of $L$-functions, see e.g. \cite{B2}, \cite{L} and their references.

The logarithmic Mahler measure $m (P)$ of a Laurent polynomial $P$ over $\Z$ also appears in ergodic theory as the entropy of a certain subshift defined by $P$ of the full shift for $\Z^d$ with values in the circle c.f. \cite{LSW} and \cite{S}. For relations of $m (P)$ with hyperbolic volumes we refer to \cite{B3}.

We now turn our attention to the determinants in the title. 

Let $\Nh$ be a finite von~Neumann algebra with a faithful normal finite trace $\tau$. In this note we only need the von~Neumann algebra $\Nh\Gamma$ of a discrete group $\Gamma$ which is easy to define, c.f. section \ref{sec:2}. For an invertible operator $A$ in $\Nh$ the Fuglede--Kadison determinant \cite{FK} is defined by the formula
\[
\ddet_{\Nh} A = \exp \tau (\log |A|) \; .
\]
Here $|A| = (A^* A)^{1/2}$ and $\log|A|$ are operators in $\Nh$ obtained by the functional calculus. For arbitrary operators $A$ in $\Nh$ one sets
\[
\ddet_{\Nh} A = \lim_{\varepsilon \to 0+} \ddet_{\Nh} (|A| + \varepsilon) \; .
\]
The main result in \cite{FK} asserts that $\det_{\Nh}$ is {\it multiplicative} on $\Nh$. 
This determinant has several interesting applications. It appears in the definitions of analytic and combinatorial $L^2$-torsion of Laplacians on covering spaces \cite{Lue2}, chapter 3. It was used in the work \cite{HS} on the invariant subspace problem in $\mathrm{II}_1$-factors and it is related to the entropy of algebraic actions of discrete amenable groups \cite{D1}, \cite{DS} and to Ljapunov exponents \cite{D2}.

It was observed in \cite{Lue2}, Example 3.13 that the Mahler measure has the following functional analytic interpretation. For the group $\Gamma = \Z^d$ there is a canonical isomorphism of $\Nh\Gamma$ with $L^{\infty} (T^d , \mu)$ which we write as $A \mapsto \hat{A}$. The relation of Mahler measures with Fuglede--Kadison determinants is then given by the formula:
\begin{equation} \label{eq:1}
\ddet_{\Nh\Z^d} (A) = M (\hat{A}) \quad \mbox{for all} \; A \in \Nh\Z^d \; .
\end{equation}

In this modest note we review certain classical properties of Mahler measures and discuss their generalizations to Fuglede--Kadison determinants of group von~Neumann algebras. In particular, this concerns approximation formulas e.g.~by finite dimensional determinants. Usually the results for Mahler measures are stronger than the corresponding ones for general Fuglede--Kadison determinants and this raises interesting questions. In section \ref{sec:2} we also extend part of the formalism of the theory of orthogonal polynomials on the unit circle to a non-commutative context. Moreover, in section \ref{sec:3} we show that in a suitable sense $\det_{\Nh\Gamma}$ is continuous on the space of marked groups if the argument is invertible in $L^1$. 

\section{Approximation by finite dimensional determinants}
\label{sec:2}

In this section we discuss one way to approximate Mahler measures and more generally Fuglede--Kadison determinants of amenable groups by finite dimensional determinants. Another method which works for residually finite groups is explained in the next section as a special case of theorem \ref{t17}.

The Mahler measure aspect of this topic begins with Szeg\"o's paper \cite{Sz}. For an integrable function $P$ on $S^1$ consider the Fourier coefficients
\[
c_{\nu} = \int_{S^1} z^{-\nu} P (z) \; d\mu (z) \quad \mbox{for} \; \nu \in \Z
\]
and define the following determinants for $n \ge 0$
\[
D_n = \det \left( \vcenter{\xymatrix@=1pt{c_0 & c_1 \ar@{.}[r] & c_{n-1} \\
c_{-1} \ar@{.}[ddd] & c_0 \ar@{.}[ddd]\ar@{.}[r] \ar@{.}[dddr] & c_{n-2} \ar@{.}[ddd] \\
 &  & \\
 & & \\
c_{-n+1} & c_{-n+2} \ar@{.}[r] & c_0
}} \right) \; .
\]
If $P$ is real valued we have $\overline{c}_{\nu} = c_{-\nu}$ and if $P (z) \ge 0$ for all $z \in S^1$ we may view $c_{-\nu}$ as the $\nu$-th moment of the measure $P (z) \, d\mu (z)$. In that case the $D_n$'s are the associated Toeplitz determinants.

\begin{theorem}[Szeg\"o]
  \label{t1}
If $P$ is a continuous real valued function on $S^1$ with $P (z) > 0$ for all $z \in S^1$, then $D_n > 0$ for all $n \ge 0$ and we have the limit formula:
\[
M (P) = \lim_{n \to \infty} \sqrt[n]{D_n} \; .
\]
\end{theorem}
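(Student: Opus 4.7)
I would approach the theorem via the classical theory of orthogonal polynomials on the unit circle associated to the measure $d\sigma = P\, d\mu$.

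\textbf{Step 1 (Positivity).} The matrix inside $D_n$ is exactly the Gram matrix of $\{1, z, \ldots, z^{n-1}\}$ in the Hilbert space $H = L^2 (S^1, P\, d\mu)$, because $\int z^{j-i}\, P\, d\mu = c_{i-j}$. Since $P$ is continuous and strictly positive on the compact set $S^1$, it is bounded below by some $\delta > 0$, so the monomials are linearly independent in $H$ and the Gram matrix is positive definite. Hence $D_n > 0$ for every $n$.

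\textbf{Step 2 (Reduction to orthogonal polynomials).} Apply Gram--Schmidt in $H$ to obtain monic polynomials $\Phi_0, \Phi_1, \ldots$ with $\deg \Phi_k = k$ and set $h_k = \|\Phi_k\|_H^2 > 0$. A standard computation (change of basis has determinant $1$ because $\Phi_k$ is monic) gives
\[
D_n = \prod_{k=0}^{n-1} h_k, \quad \text{so} \quad \frac{1}{n}\log D_n = \frac{1}{n}\sum_{k=0}^{n-1} \log h_k.
\]
By Ces\`aro's lemma, it therefore suffices to prove the Szeg\"o limit $h_k \to M(P)$ as $k \to \infty$.

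\textbf{Step 3 (Two-sided bound on $h_n$).} Use the extremal characterization $h_n = \min \{\|Q\|_H^2 : Q \text{ monic of degree } n\}$. For the lower bound, Jensen's inequality applied to the probability measure $\mu$ gives, for any monic $Q$ of degree $n$,
\[
\log \int |Q|^2 P\, d\mu \ge \int \log |Q|^2\, d\mu + \int \log P\, d\mu \ge \log M(P),
\]
since $\int \log |z - \alpha|\, d\mu(z) = \log^+ |\alpha| \ge 0$ for every $\alpha \in \C$; hence $h_n \ge M(P)$. For the upper bound, use the Szeg\"o factorization: since $\log P$ is continuous on $S^1$, the function
\[
D (z) = \exp \Bigl( \tfrac{1}{2} a_0 + \sum_{k \ge 1} a_k z^k \Bigr), \quad \text{where} \; \log P \sim \sum_{k \in \Z} a_k z^k,
\]
is holomorphic and nonvanishing in the open unit disk, extends continuously to the closed disk with $|D|^2 = P$ on $S^1$ and $|D(0)|^2 = M(P)$. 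Because $1/D$ is continuous on the closed disk and holomorphic inside, its Taylor series converges uniformly on $S^1$; let $R_{n-1}$ be a polynomial of degree $\le n-1$ with $R_{n-1}(0) = 1/D(0)$ and $R_{n-1} \to 1/D$ uniformly on $S^1$. Then the reversed polynomial $Q_n (z) = D(0)\, z^n\, \overline{R_{n-1} (1/\bar z)}$ is monic of degree $n$ and
\[
\|Q_n\|_H^2 = \int_{S^1} |D(0)|^2 |R_{n-1}|^2 |D|^2\, d\mu \longrightarrow |D(0)|^2 \int_{S^1} 1\, d\mu = M(P),
\]
which yields $\limsup h_n \le M(P)$.

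\textbf{Step 4 (Conclusion).} Combining the two bounds gives $h_n \to M(P)$, and Ces\`aro convergence in Step 2 yields $n^{-1} \log D_n \to \log M(P)$, i.e. $\sqrt[n]{D_n} \to M(P)$.

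The main technical obstacle is the upper bound in Step 3: one must construct the Szeg\"o function $D$ and show that $1/D$ is well approximated by polynomials so that the test polynomial $Q_n$ can be built. Continuity and strict positivity of $P$ are essential here, as they guarantee that $\log P$ is continuous and that $1/D$ is bounded on the closed disk, making the uniform approximation step go through.
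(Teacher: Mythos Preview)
Your approach via orthogonal polynomials and the Szeg\"o function is classical and substantially different from the paper's. The paper does not prove Theorem~\ref{t1} directly; instead it deduces it in Example~\ref{t5} from the operator-theoretic Theorem~\ref{t4}, whose proof rests on approximating $\log$ by polynomials on a compact interval bounded away from zero (Weierstra{\ss}) together with Schick's trace-approximation result for F{\o}lner sequences. Your route yields the sharper statement $h_n\to M(P)$ (not just its Ces\`aro average) and stays within one-variable analysis; the paper's argument, by contrast, generalizes verbatim to arbitrary amenable groups.

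There is, however, a gap in the upper bound of Step~3. From continuity of $P$ alone you cannot conclude that $D$ (or $1/D$) extends continuously to the closed disk: the harmonic conjugate of a continuous function on $S^1$ need not be continuous. Even granting that $1/D$ lies in the disk algebra, its Taylor series need not converge uniformly on $S^1$ (there are du~Bois-Reymond--type counterexamples). What you \emph{do} have is $D,\,1/D\in H^{\infty}$, since $|D|=\sqrt{P}$ is bounded above and below on $S^1$; hence $1/D\in H^2$ and its Taylor polynomials $R_{n-1}$ converge to $1/D$ in $L^2(S^1,\mu)$. Because $D\in L^{\infty}(S^1)$, this gives $R_{n-1}D\to 1$ in $L^2$, and therefore
\[
\|Q_n\|_H^2=|D(0)|^2\int_{S^1}|R_{n-1}D|^2\,d\mu\longrightarrow |D(0)|^2=M(P)\,.
\]
So your argument goes through once ``uniformly on $S^1$'' is replaced by ``in $L^2(S^1)$'' in the approximation step; with that correction the proof is complete.
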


Using the theory of orthogonal polynomials on the unit circle the conditions in Szeg\"o's original theorem have been significantly relaxed, see \cite{Si} for the history:

\begin{theorem}
\label{t2}
  The assertions in Szeg\"o's theorem hold for every real-valued non-negative essentially bounded measurable function $P$ on $S^1$ which is non-zero on a set of positive measure.
\end{theorem}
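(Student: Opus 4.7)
The plan is to interpret the Toeplitz determinants $D_n$ as Gram determinants of monomials with respect to the measure $P\,d\mu$, and to reduce the limit formula to the classical theorem about the asymptotics of monic orthogonal polynomials on the unit circle.

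First I would observe that $D_n$ is the Gram determinant of $1, z, \ldots, z^{n-1}$ with respect to the Hermitian form $\langle f, g \rangle_P = \int_{S^1} f \bar{g} P \, d\mu$; indeed $\langle z^j, z^i \rangle_P = c_{j-i}$. Since $P \geq 0$ and $P > 0$ on a set of positive measure, this form is positive definite on the space of polynomials: a polynomial $f$ with $\int |f|^2 P \, d\mu = 0$ vanishes a.e.\ on a set of positive measure, hence vanishes identically because polynomials have only finitely many zeros. This proves $D_n > 0$. Gram--Schmidt then produces monic orthogonal polynomials $\Phi_k$ of degree $k$, and the standard formula yields $D_n = \prod_{k=0}^{n-1} \|\Phi_k\|_P^2$, so that $\delta_n := D_n / D_{n-1} = \|\Phi_{n-1}\|_P^2$.

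Next, using the reflection $Q^*(z) = z^{n-1}\overline{Q(1/\bar z)}$, which preserves $|Q|$ on the circle and exchanges the leading and constant coefficients, one rewrites the extremal characterization of $\delta_n$ as
\[
\delta_n \;=\; \inf \left\{ \int_{S^1} |R|^2 P \, d\mu \,:\, R \in \C[z], \; \deg R \leq n-1, \; R(0) = 1 \right\}.
\]
As $n$ grows the admissible set grows, so $(\delta_n)$ is non-increasing. It therefore suffices to show $\lim_n \delta_n = M(P)$: once this is done, a Ces\`aro argument applied to $\log D_n = \sum_{k=1}^n \log \delta_k$ yields $\sqrt[n]{D_n} \to M(P)$ (also when $M(P) = 0$, where the Ces\`aro mean tends to $-\infty$).

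The real obstacle is identifying $\lim \delta_n$ with $M(P)$. When $\log P \in L^1(\mu)$ one introduces the Szeg\H{o} outer function $\Sh \in H^2$ with $|\Sh|^2 = P$ a.e.\ and $|\Sh(0)|^2 = M(P)$; since $P$ is essentially bounded, in fact $\Sh \in H^\infty$. For any polynomial $R$ with $R(0) = 1$, the product $R\Sh$ lies in $H^2$ with value $\Sh(0)$ at the origin, so
\[
\int_{S^1} |R|^2 P \, d\mu \;=\; \|R\Sh\|_{H^2}^2 \;\geq\; |\Sh(0)|^2 \;=\; M(P),
\]
giving the lower bound $\delta_n \geq M(P)$. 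For the matching upper bound one approximates the extremizer $\Sh(0)/\Sh \in H^2$ by polynomials (preserving the normalization at $0$), using density of polynomials in $H^2$ together with the outerness of $\Sh$. When $\log P \notin L^1(\mu)$ one has $M(P) = 0$, and one must construct polynomials $R_n$ with $R_n(0) = 1$ and $\int |R_n|^2 P \, d\mu \to 0$; this is precisely the content of the Szeg\H{o}--Kolmogorov--Krein theorem and exploits the failure of integrability of $\log P$ to drive the extremal problem to zero. The full verification of this last step, which is the technical heart of the argument, is carried out via the modern theory of orthogonal polynomials on the unit circle as developed in \cite{Si}.
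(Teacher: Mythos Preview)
Your proposal is correct and follows the same route as the paper: both defer the substance to the theory of orthogonal polynomials on the unit circle in \cite{Si}, the paper by directly citing Theorem~2.7.14 there, you by unpacking the standard argument (Gram determinants, the extremal characterization of $\delta_n = D_n/D_{n-1}$, Ces\`aro, and the Szeg\H{o}--Kolmogorov--Krein identification of $\lim \delta_n$). One small correction worth noting: the formal extremizer $\Sh(0)/\Sh$ need not lie in $H^2$ (take $P$ with $\log P \in L^1$ but $1/P \notin L^1$); the outerness of $\Sh$ you mention is precisely what repairs this --- one approximates the constant function $\Sh(0) \in H^2$ by products $R\,\Sh$ with $R$ polynomial and then renormalizes so that $R(0)=1$.
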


\begin{proof}
  The $D_n$'s are determinants of Toeplitz matrices for the non-trivial measure $P \, d\mu$. These matrices are positive definit and in particular $D_n > 0$ for every $n \ge 0$, c.f. \cite{Si} section 1.3.2. The limit formula $M (P) = \lim_{n \to \infty} \sqrt[n]{D_n}$ is a special case of \cite{Si} theorem 2.7.14, equality of (i) with (vi) applied to the probability measure $P \| P\|^{-1}_1 \, d\mu$ on $S^1$. (In following that proof, the shortcut in the remark on p.~139 of loc.~cit. is useful.)
\end{proof}

Let us now explain the von~Neumann aspect of these results. For a discrete group $\Gamma$ we will view the elements of $L^p (\Gamma)$ as formal series $\sum_{\gamma \in \Gamma} x_{\gamma} \gamma$ with $\sum_{\gamma} |x_{\gamma}|^p < \infty$. It is then clear that $\Gamma$ acts isometrically by left and right multiplication on $L^p (\Gamma)$. The von~Neumann algebra $\Nh\Gamma$ of $\Gamma$ may be defined as the algebra of bounded operators $A : L^2 \Gamma \to L^2 \Gamma$ which are left $\Gamma$-equivariant. For $\gamma \in \Gamma$ define the unitary operator $R_{\gamma} : L^2 \Gamma \to L^2 \Gamma$ by $R\gamma (x) = x\gamma$. The $\C$-algebra homorphism
\[
r : \C\Gamma \to \Nh\Gamma \quad \mbox{with} \quad r \Big( \textstyle{\sum\limits_{\gamma}} f_{\gamma} \gamma \Big) = \textstyle{\sum\limits_{\gamma}} f_{\gamma} R_{\gamma^{-1}}
\]
extends to a homomorphism $r : L^1 (\Gamma) \to \Nh\Gamma$ with $\| r (f) \| \le \| f \|_1$ for all $f \in L^1 \Gamma$. By looking at $r (f) (e)$ where $e \in \Gamma \subset L^2 \Gamma$ is the unit element of $\Gamma$, we see that $r$ is injective. It will often be viewed as an inclusion in the following. Setting $f^* = \sum \of_{\gamma} \gamma^{-1}$ for $f = \sum f_{\gamma} \gamma$ in $L^1 \Gamma$ the equality $r (f^*) = r (f)^*$ holds. The canonical trace $\tau = \tau_{\Nh\Gamma}$ on $\Nh\Gamma$ is defined by the formula $\tau (A) = (Ae , e)$. It vanishes on commutators $[A,B] = AB - BA$ for $A,B$ in $\Nh\Gamma$. For $f$ in $L^1 \Gamma$ we have $\tau (r(f)) = f_e$. Finally, $\det_{\Nh\Gamma} A$ is defined as in the introduction for every $A$ in $\Nh\Gamma$.

For an abelian group $\Gamma$ with (compact) Pontrjagin dual $\hGamma = \Hom_{\cont} (\Gamma,S^1)$ and Haar probability measure $\mu$ on $\hat{\Gamma}$, the Fourier transform provides an isometry of Hilbert spaces
\[
\Fh : L^2 \Gamma \iso L^2 (\hGamma , \mu) \; .
\]
On the dense subspace $\C\Gamma$ it is given by $\Fh (f) (\chi) = \sum_{\gamma} f_{\gamma} \chi (\gamma)$ for $\chi \in \hGamma$. One can show that under the induced isomorphism of algebras of bounded operators
\[
\Bh (L^2 (\Gamma)) \to \Bh (L^2 (\hGamma , \mu)) \; , \; A \mapsto \Fh \verk A \verk \Fh^{-1}
\]
the von~Neumann algebra $\Nh\Gamma$ maps isomorphically onto $L^{\infty} (\hGamma , \mu)$ where the latter operates by multiplication on $L^2 (\hGamma , \mu)$. Denoting this isomorphism by $A \mapsto \hA$ we have $\hA = \Fh (A(e))$. Namely, $L^2 (\Gamma)$ is a left $\C \Gamma$-algebra and for $f \in \C\Gamma$ we therefore have
\[
\Fh (A (f)) = \Fh (f A (e)) = \Fh (f) \Fh (A (e))\; .
\]
Now the assertion follows because $\Fh (\C\Gamma)$ is dense in $L^2 (\hGamma , \mu)$. It follows that we have
\[
\tau (A) = (Ae,e) = (\Fh (A (e)) , \Fh (e)) = (\hat{A} , 1) = \int_{\hGamma} \hat{A} \, d\mu \; .
\]
Hence there is a commutative diagram
\[
\xymatrix{
\Nh\Gamma \ar[rr]^{\textstyle \hat{\sim}} \ar[dr]_{\textstyle \tau} & & L^{\infty} (\hGamma , \mu) \ar[dl]^{\textstyle \int_{\hGamma}} \\
 & \C &
}
\]
where $\int_{\hGamma}$ denotes integration against the measure $\mu$. We conclude using the definition of $\det_{\Nh\Gamma}$ and Levi's theorem that we have:
\[
\ddet_{\Nh\Gamma} A = \exp \int_{\hGamma} \log |\hA| \, d\mu \quad \mbox{for} \; A \in \Nh\Gamma \; .
\]
In particular, for $\Gamma = \Z^d$ we get formular \eqref{eq:1} from the introduction. It also follows that the generalized Mahler measures studied in \cite{Li2} can be expressed as Fuglede--Kadison determinants.

The non-commutative generalization of Szeg\"o's theorem that we have in mind is valid for amenable groups. A F{\o}lner sequence $(F_n)$ in $\Gamma$ is a sequence of finite subsets $F_n \subset \Gamma$ which are almost invariant in the following sense: For any $\gamma \in \Gamma$ we have
\[
\lim_{n\to\infty} \frac{|F_n \gamma \setminus F_n|}{|F_n|} = 0 \; .
\]
A countable discrete group $\Gamma$ is said to be amenable if it has a F{\o}lner sequence. For example $\Z$ is amenable, the sets  $F_n = \{ 0 , 1 , \ldots , n-1 \}$ forming a F{\o}lner sequence.

For a finite subset $F \subset \Gamma$ and an operator $A \in \Nh\Gamma$ consider the following endomorphism of $\C F$, the finite-dimensional $\C$-vector space over $F$:
\[
A_F : \C F \overset{i_F}{\hookrightarrow} L^2 \Gamma \xrightarrow{A} L^2 \Gamma \xrightarrow{p_F} \C F \; .
\]
Here $i_F$ is the inclusion and $p_F$ the orthogonal projection to $\C F$. We have $p^*_F = i_F$ for the $L^2$-adjoints and hence $(A_F)^* = (A^*)_F$.

\begin{lemma}
  \label{t3}
If $A \in \Nh\Gamma$ is positive then $A_F$ is positive as well and hence $\det A_F \ge 0$. If $A$ is positive, and injective on $\C\Gamma$ then $A_F$ is a positive automorphism of $\C F$ and hence $\det A_F > 0$.
\end{lemma}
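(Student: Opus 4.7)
The identity $p_F^{*} = i_F$ (noted just before the lemma) realises $A_F = p_F A i_F$ as the compression of $A$ to the finite-dimensional subspace $\C F \subset L^2 \Gamma$. For $x \in \C F$ one therefore computes
\[
\langle A_F x, x\rangle_{\C F} = \langle p_F A i_F x, x\rangle_{\C F} = \langle A(i_F x), i_F x\rangle_{L^2 \Gamma},
\]
which is non-negative as soon as $A$ is positive. Combined with $(A_F)^{*} = (A^{*})_F = A_F$, this says that $A_F$ is a positive Hermitian endomorphism of $\C F$; its eigenvalues then lie in $[0,\infty)$ and so $\det A_F \ge 0$.

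For the second claim, take the positive square root $B = A^{1/2} \in \Nh\Gamma$, so that $A = B^{*}B$. The computation above refines to
\[
\langle A_F x, x\rangle_{\C F} = \|B(i_F x)\|^2_{L^2\Gamma}.
\]
Suppose now that $A_F x = 0$ for some $x \in \C F$. Positivity of $A_F$ forces $\langle A_F x, x\rangle = 0$, hence $B(i_F x) = 0$. If $A$ is injective on $\C\Gamma$, then so is $B$: indeed, $By = 0$ implies $Ay = B^2 y = 0$, hence $y=0$. Applying this to $y = i_F x \in \C\Gamma$ gives $i_F x = 0$, i.e.\ $x = 0$. Therefore $A_F$ is injective on the finite-dimensional space $\C F$, hence an automorphism; being positive, it has strictly positive eigenvalues, and $\det A_F > 0$.

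No serious obstacle is anticipated: both halves are formal consequences of the adjoint relation $p_F^{*} = i_F$ together with the existence of a positive square root in $\Nh\Gamma$. The only point worth flagging is the passage from injectivity of $A$ on $\C\Gamma$ to injectivity of $B = A^{1/2}$ on $\C\Gamma$, and this is immediate from $A = B^2$.
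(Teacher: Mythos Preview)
Your proof is correct and follows essentially the same route as the paper: both compute $(A_F v,v) = (Av,v) = \|Bv\|^2$ with $B = \sqrt{A}$, and from $A_F v = 0$ deduce $Bv = 0$, hence $Av = B(Bv) = 0$, hence $v = 0$ by injectivity of $A$ on $\C\Gamma$. The only cosmetic difference is that you phrase the last step as ``$B$ is injective on $\C\Gamma$'' and then apply it, while the paper runs the chain $Bv = 0 \Rightarrow Av = 0 \Rightarrow v = 0$ directly.
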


\begin{proof}
  Set $B = \sqrt{A}$. For $v \in \C F$ we have $(A_F v , v) = (Av , v) = \| Bv \|^2$ and hence $A_F$ is positive. Moreover $A_F v = 0$ implies $Bv = 0$ and hence $Av = B (Bv) = 0$. If $A$ is injective on $\C \Gamma$ we get $v = 0$ and thus $A_F$ is injective and hence an automorphism.
\end{proof}

The approximation result corresponding to Szeg\"o's theorem is the following one which was proved in \cite{D1} Theorem 3.2:

\begin{theorem}
  \label{t4}
Let $\Gamma$ be a finitely generated amenable group with a F{\o}lner sequence $(F_n)$ and let $A$ be a positive invertible operator in $\Nh\Gamma$. Then $\det A_{F_n} > 0$ for all $n$ and we have:
\[
\ddet_{\Nh\Gamma} A = \lim_{n\to\infty} (\det A_{F_n})^{1/|F_n|}\; .
\]
\end{theorem}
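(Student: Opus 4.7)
The plan is to transform the assertion into a weak-convergence statement for normalized empirical spectral measures on a common compact positive interval. Since $A$ is positive and invertible, fix $0<a\le b$ with $a\cdot I\le A\le b\cdot I$. For $v\in\C F_n$ one then has $a\|v\|^2\le (A_{F_n}v,v)\le b\|v\|^2$, so $A_{F_n}$ is positive definite with spectrum in $[a,b]$, and in particular $\det A_{F_n}>0$. Let $\nu_n$ denote the normalized eigenvalue counting measure of $A_{F_n}$ (each eigenvalue weighted by $1/|F_n|$) and $\mu_A$ the spectral measure of $A$ with respect to $\tau$. Both are probability measures supported in $[a,b]$, and
\[
\frac{1}{|F_n|}\log\det A_{F_n} = \int_a^b \log\lambda\,d\nu_n(\lambda), \qquad \log\ddet_{\Nh\Gamma}A = \int_a^b \log\lambda\,d\mu_A(\lambda).
\]
Since $\log$ is continuous on the compact set $[a,b]$, Weierstrass approximation reduces the theorem to proving the moment convergence
\[
\frac{1}{|F_n|}\operatorname{tr}(A_{F_n}^k) \xrightarrow[n\to\infty]{} \tau(A^k) \quad\text{for every } k\ge 0.
\]

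The moment convergence is the heart of the proof and is where the Følner hypothesis enters. Let $P_n$ denote the orthogonal projection of $L^2\Gamma$ onto $\C F_n$ and set $Q_n = I - P_n$. Left $\Gamma$-equivariance of $A^k$ gives $(A^k\gamma,\gamma)=\tau(A^k)$ for every $\gamma\in\Gamma$, so $\operatorname{tr}(P_nA^kP_n) = |F_n|\,\tau(A^k)$ exactly. On the other hand $A_{F_n}^k$ coincides with $(P_nAP_n)^k$ on $\C F_n$, and writing $A = AP_n + AQ_n$ and telescoping, the difference $\operatorname{tr}(P_nA^kP_n)-\operatorname{tr}((P_nAP_n)^k)$ decomposes into finitely many terms, each containing at least one factor $Q_n$ sandwiched between powers of $A$, typically of the shape $\operatorname{tr}(P_nA^iQ_nA^{k-i}P_n)$ with $1\le i<k$. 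Using self-adjointness of $A$ and the matrix coefficients $A^je = \alpha^{(j)}=\sum_\eta\alpha^{(j)}_\eta\eta\in L^2\Gamma$, one obtains by Cauchy--Schwarz an estimate
\[
\frac{1}{|F_n|}\bigl|\operatorname{tr}(P_nA^iQ_nA^{k-i}P_n)\bigr|
\le \prod_{j\in\{i,k-i\}}\Bigl(\sum_{\eta\in\Gamma}|\alpha^{(j)}_\eta|^2\,\frac{|F_n\eta\setminus F_n|}{|F_n|}\Bigr)^{1/2}.
\]
For every fixed $\eta$ the Følner ratio $|F_n\eta\setminus F_n|/|F_n|$ tends to $0$, and the summable majorant $\eta\mapsto|\alpha^{(j)}_\eta|^2$, with total mass $\|A^je\|_{L^2\Gamma}^2<\infty$, legitimizes dominated convergence on the counting measure of $\Gamma$; hence each factor, and therefore the whole discrepancy, goes to zero.

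Moment convergence implies weak convergence $\nu_n\to\mu_A$ of probability measures on $[a,b]$, whence $\int\log\lambda\,d\nu_n\to\int\log\lambda\,d\mu_A$, and exponentiation yields the claim. The main obstacle is the boundary estimate in the moment step: for $k\le 2$ it is a direct computation, but for larger $k$ one must combinatorially keep track of all $2^{k-1}$ terms produced by the telescoping insertion of $P_n+Q_n$ and supply uniformly summable majorants for each, the point being that $A$ need not be finitely supported or even lie in the reduced $C^*$-algebra of $\Gamma$. This is the route carried out in \cite{D1}, Theorem~3.2.
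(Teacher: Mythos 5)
Your proof is correct and follows exactly the route the paper indicates: reduce to convergence of normalized traces of powers, $|F_n|^{-1}\operatorname{tr}(A_{F_n}^k)\to\tau(A^k)$, and then apply the Weierstrass approximation theorem to $\log$ on a compact interval bounded away from zero, using that the spectra of $A$ and of all $A_{F_n}$ lie in a common $[a,b]\subset(0,\infty)$. The only difference is that you prove the trace-approximation step inline via the telescoping insertion of $P_n+Q_n$ and the F{\o}lner/Cauchy--Schwarz estimate, whereas the paper delegates precisely this step to Schick's result and cites \cite{D1}, Theorem 3.2, for the whole argument; your estimate is the standard proof of that step and is sound (up to the harmless bookkeeping of which exponents $j$ appear in the two Hilbert--Schmidt factors).
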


Positivity of $\det A_{F_n}$ follows from lemma \ref{t3}. The proof of the limit formula is based on an approximation result for traces of polynomials in $A$ due to Schick \cite{Sc} generalizing previous work of L\"uck. Theorem \ref{t4} follows by applying the Weierstra{\ss} approximation theorem to $\log$ and the fact that the spectrum of $A$ and all $A_{F_n}$ is uniformly bounded away from zero. 

We would like to point out that another part of Szeg\"o's theory which characterizes $M (P)$ by an extremal property has been generalized to the setting of von~Neumann algebras in \cite{BL}.

\begin{example} \label{t5} \rm
Let us now show that Szeg\"o's theorem \ref{t1} is a special case of theorem \ref{t4}. Consider a measurable essentially bounded function $P : S^1 \to \R$ with $P (z) \ge 0$ for all $z \in S^1$. It defines a positive element $P$ of the von~Neumann algebra $L^{\infty} (S^1 , \mu)$. Let $A$ be the positive operator in $\Nh\Z$ with $\hA = P$, i.e. with $\Fh (A (0)) = P$. For $\nu \in \Z \subset L^2 (\Z)$ write $(\nu)$ for its image in $L^2 (\Z)$. Then we have $\Fh (\nu) = z^{\nu}$ viewed as a character on $S^1$. Thus
\[
c_{\nu} = \int_{S^1} z^{-\nu} P (z) \, d\mu (z) = (P, z^{\nu}) = (\Fh (A (0)) , \Fh (\nu)) = (A (0) , (\nu)) \; .
\]
Now consider the F{\o}lner sequence $F_n = \{ 0 , 1 , \ldots , n-1 \}$ of $\Z$. The matrix of $A_{F_n}$ with respect to the basis $(0) , (1) , \ldots , (n-1)$ of $\C F_n$ has $(i,j)$-th coefficient
\[
(A_{F_n} (i) , (j)) = (A (i) , (j)) = (A (0) , (j-i)) = c_{j-i} \; .
\]
Thus we have $\det A_{F_n} = D_n$ and therefore theorem \ref{t4} implies theorem \ref{t1} (even with ``continuous'' replaced by ``measurable essentially bounded''). 
\end{example}
The analogue of theorem \ref{t2} in our setting does not seem to be known. We formulate it as a question:
\begin{question}
  \label{t6}
Let $\Gamma$ be a finitely generated amenable group and $A$ a positive operator in $\Nh\Gamma$. Does the limit formula
\[
\ddet_{\Nh\Gamma} A = \lim_{n\to\infty} (\det A_{F_n})^{1 / |F_n|}
\]
hold for every F{\o}lner sequence?
\end{question}

\begin{rems}
  1) In theorem \ref{t2} the non-zero positive operators in $\Nh\Z \cong L^{\infty} (S^1 , \mu)$ were considered. These are injective on $\C\Z$ because $\Fh (\C\Z) = \C [z,z^{-1}]$, and non-zero Laurent polynomials vanish only in a set of measure zero on $S^1$. Perhaps it is reasonable therefore to first consider only positive operators which are injective on $\C\Gamma$ so that by lemma \ref{t3} all $A_{F_n}$ are positive automorphisms. On the other hand, for $A = 0$ the limit formula is trivially true.\\
2) Because of the next proposition it would suffice to prove the inequality
\[
\ddet_{\Nh\Gamma} A \le \varliminf_{n\to\infty} (\det A_{F_n})^{1 /|F_n|}
\]
in order to answer question \ref{t5} affirmatively.
\end{rems}

\begin{prop}
  \label{t7}
For a finitely generated group $\Gamma$ and any positive operator $A$ on $\Nh\Gamma$ we have
\[
\ddet_{\Nh\Gamma} A \ge \varlimsup_{n\to\infty} (\det A_{F_n})^{1/|F_n|} \; .
\]
\end{prop}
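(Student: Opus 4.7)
The plan is to deduce the inequality from Theorem \ref{t4} by perturbing $A$ into an invertible positive operator. For each $\varepsilon > 0$, the operator $A + \varepsilon$ is positive with spectrum contained in $[\varepsilon, \|A\| + \varepsilon]$, hence invertible in $\Nh\Gamma$. Applying Theorem \ref{t4} to $A + \varepsilon$ along the given F{\o}lner sequence $(F_n)$ yields
\[
\ddet_{\Nh\Gamma}(A + \varepsilon) = \lim_{n \to \infty} \bigl( \det (A + \varepsilon)_{F_n} \bigr)^{1/|F_n|}.
\]

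Next I would exploit the identity $(A + \varepsilon)_{F_n} = A_{F_n} + \varepsilon \cdot \id_{\C F_n}$, which is immediate from $p_{F_n} \verk i_{F_n} = \id_{\C F_n}$. By Lemma \ref{t3} the operator $A_{F_n}$ is positive, so its eigenvalues $\lambda_1, \ldots, \lambda_{|F_n|}$ are non-negative. Adding $\varepsilon$ only enlarges them, so
\[
\det (A_{F_n} + \varepsilon) = \prod_i (\lambda_i + \varepsilon) \ge \prod_i \lambda_i = \det A_{F_n}.
\]
Taking $|F_n|$-th roots and passing to $\varlimsup$ then gives
\[
\ddet_{\Nh\Gamma}(A + \varepsilon) \ge \varlimsup_{n \to \infty} (\det A_{F_n})^{1/|F_n|}.
\]

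Finally, letting $\varepsilon \to 0+$ and invoking the definition
\[
\ddet_{\Nh\Gamma} A = \lim_{\varepsilon \to 0+} \ddet_{\Nh\Gamma}(A + \varepsilon),
\]
(valid since $|A| = A$ for the positive operator $A$), one obtains the claimed inequality. There is no substantial obstacle in this argument: the whole point is that the perturbation $A \mapsto A + \varepsilon$ only ever increases the finite-dimensional determinants, so the corresponding limit inequality is automatically one-sided, even when $A$ fails to be invertible and the limit in Theorem \ref{t4} is not available for $A$ itself. The genuinely subtle reverse inequality, which would settle Question \ref{t6}, is precisely the direction that is expected to require deeper input.
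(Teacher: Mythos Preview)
Your proof is correct and follows essentially the same approach as the paper: perturb $A$ to $A+\varepsilon$, apply Theorem~\ref{t4} to the invertible operator $A+\varepsilon$, use $(A+\varepsilon)_{F_n}=A_{F_n}+\varepsilon\,\id$ together with the monotonicity $\det(A_{F_n}+\varepsilon)\ge\det A_{F_n}$, and then let $\varepsilon\to 0+$ via the definition of the Fuglede--Kadison determinant. The only difference is that you spell out the eigenvalue argument for the monotonicity step, which the paper leaves implicit.
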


\begin{proof}
  For $A$ in $\Z\Gamma$ this is proved in \cite{Sc}. In general we can argue as follows. For any endomorphism $\varphi$ set $\varphi^{(\varepsilon)} = \varphi + \varepsilon \id$. Then we have $(A^{(\varepsilon)})_F = (A_F)^{(\varepsilon)}$ for finite $F \subset \Gamma$. The following relations hold:
  \begin{eqnarray*}
    \ddet_{\Nh\Gamma} A & \overset{(i)}{=} & \lim_{\varepsilon \to 0+} \ddet_{\Nh\Gamma} A^{(\varepsilon)} \overset{(ii)}{=} \lim_{\varepsilon \to 0+} \lim_{n\to\infty} (\det (A^{(\varepsilon)})_{F_n})^{1/|F_n|} \\
& \overset{(iii)}{\ge} & \varlimsup_{n\to\infty} (\det A_{F_n})^{1 / |F_n|} \; .
  \end{eqnarray*}
Here (i) is true by the definition of the Fuglede--Kadison determinant and (ii) follows from theorem \ref{t4} applied to $A^{(\varepsilon)}$. Finally (iii) holds because $\det (A_{F_n})^{(\varepsilon)} \ge \det A_{F_n}$ for every $n \ge 1$ and $\varepsilon > 0$.
\end{proof}

In the rest of this section we develop a formalism for the determinants $\det A_F$ and $\det_{\Nh\Gamma}A$ which is suggested by the theory of orthogonal polynomials on the unit circle. We also point out the relation to question \ref{t6}.

We start with the following well known lemma:

\begin{lemma}
  \label{t8}
For a block matrix over a field with $A$ invertible the following formula holds:
\[
\det \left(
  \begin{smallmatrix}
    A & B \\ C & D 
  \end{smallmatrix} \right) = \det (D-CA^{-1} B) \det A \; .
\]
\end{lemma}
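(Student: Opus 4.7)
The plan is to exhibit an explicit block LU-type factorization of the matrix on the left-hand side and then take determinants. Since $A$ is invertible, the Schur complement $S = D - CA^{-1}B$ of $A$ in the block matrix is well defined, and one checks directly that
\[
\begin{pmatrix} A & B \\ C & D \end{pmatrix}
=
\begin{pmatrix} I & 0 \\ CA^{-1} & I \end{pmatrix}
\begin{pmatrix} A & B \\ 0 & D - CA^{-1}B \end{pmatrix}.
\]
This identity is verified by multiplying out the two factors on the right and comparing all four blocks with those on the left; the computation is completely formal and uses only the associativity of block matrix multiplication together with $A A^{-1} = I$.

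Next I would pass to determinants. The first factor on the right is block lower-triangular with identity blocks on the diagonal, so its determinant equals $1$. The second factor is block upper-triangular with diagonal blocks $A$ and $D - CA^{-1}B$, hence its determinant equals $\det A \cdot \det(D - CA^{-1}B)$. Multiplicativity of the determinant then yields the claimed formula
\[
\det \begin{pmatrix} A & B \\ C & D \end{pmatrix} = \det(D - CA^{-1}B) \det A.
\]

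The only non-completely-trivial ingredient is the fact that a block triangular matrix has determinant equal to the product of the determinants of its diagonal blocks; this is a standard consequence of the Leibniz formula (permutations contributing nonzero terms must preserve the block decomposition) or, alternatively, of row/column expansion. I do not expect any real obstacle here: the proof is essentially a one-line computation once the factorization above is written down, and the hypothesis that $A$ be invertible is used only to make sense of $A^{-1}$ in the factorization.
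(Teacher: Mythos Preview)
Your proof is correct and essentially the same as the paper's: both use a block LU-type factorization exploiting the Schur complement and then take determinants of block-triangular factors. The only cosmetic difference is that the paper first splits off $\left(\begin{smallmatrix} A & 0 \\ 0 & I \end{smallmatrix}\right)$ and then performs the row operation, whereas you combine these into a single factorization; the content is identical.
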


\begin{proof}
  We have
\[
\det \left(
  \begin{smallmatrix}
    A & B \\ C & D 
  \end{smallmatrix} \right) = \det \left(
  \begin{smallmatrix}
    I & B \\ CA^{-1} & D
  \end{smallmatrix} \right)  \det \left(
  \begin{smallmatrix}
    A & 0 \\ 0 & I
  \end{smallmatrix} \right) = \det \left(
  \begin{smallmatrix}
    I & B \\ 0 & D-CA^{-1} B
  \end{smallmatrix} \right) \det A \; .
\]
\end{proof}

Consider a countable discrete group $\Gamma$ and finite subsets $F \subset F' \subset \Gamma$. Let $A \in \Nh \Gamma$ be positive, and injective on $\C\Gamma$, so that according to lemma \ref{t3} the endomorphism $A_F$ is positive and invertible. In terms of the decomposition 
\[
\C F' = \C F \oplus \C (F' \setminus F)
\]
the endomorphism $A_{F'}$ is given by the block matrix
\[
A_{F'} =
\begin{pmatrix}
  A_F & p_F A i_{F' \setminus F} \\
p_{F' \setminus F} A i_F & p_{F' \setminus F} Ai_{F' \setminus F}
\end{pmatrix} \; .
\]
Thus lemma \ref{t8} gives the formula
\begin{equation}
  \label{eq:2}
  \det A_{F'} = \det A_F \det (p_{F' \setminus F} A i_{F' \setminus F} - p_{F' \setminus F} A i_F A^{-1}_F p_F Ai_{F' \setminus F}) \; .
\end{equation}
Now consider the endomorphism
\[
\psi = i_F A^{-1}_F p_F Ai_{F'} : \C F' \to \C F'
\]
and the scalar product on $\C\Gamma$ defined by
\[
(u,v)_A := (Au,v) = (u,Av) \; .
\]
It is positive since for $u,v \in \C \Gamma$ there is a finite subset $F \subset \Gamma$ with $u,v \in \C F$ and then we have $(u,v)_A = (A_F u,v)$ with the positive automorphism $A_F$.

\begin{prop}
  \label{t9}
The endomorphism $\psi$ is the orthogonal projection of $\C F'$ to $\C F$ with respect to the scalar product $(,)_A$ on $\C F'$.
\end{prop}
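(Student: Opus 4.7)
The plan is to verify the two defining properties of an orthogonal projection onto $\C F$: namely that $\psi$ lands in $\C F$ and that $v - \psi(v)$ is $(\,,\,)_A$-orthogonal to $\C F$ for every $v \in \C F'$. Positivity and injectivity of $A$ on $\C\Gamma$ guarantee via lemma \ref{t3} that $A_F$ is invertible, so $\psi$ is well defined.

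The first property is immediate from the definition, since the outermost operator in $\psi = i_F A_F^{-1} p_F A i_{F'}$ is $i_F$, whose image is $\C F$. For the second property, I would fix $v \in \C F'$ and $w \in \C F$ and compute $(\psi(v),w)_A$. Using the definition $(x,y)_A = (Ax,y)$, the relation $p_F^* = i_F$, and the fact that $\psi(v) \in \C F$, one gets
\[
(\psi(v),w)_A = (A\psi(v),w) = (p_F A\psi(v),w)_{\C F} = (A_F \psi(v),w)_{\C F},
\]
and then $A_F\psi(v) = A_F A_F^{-1} p_F A i_{F'}(v) = p_F A i_{F'}(v)$, so the right-hand side equals $(p_F A i_{F'}(v),w)_{\C F} = (A i_{F'}(v), i_F w) = (v,w)_A$. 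Therefore $(v-\psi(v),w)_A = 0$ for all $w \in \C F$.

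Combined with $\psi(\C F') \subseteq \C F$, these two facts uniquely characterize $\psi$ as the orthogonal projection of $\C F'$ onto $\C F$ with respect to $(\,,\,)_A$. As a free byproduct one obtains $\psi|_{\C F} = \id_{\C F}$, which also verifies directly that $\psi$ is idempotent.

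There is no genuine obstacle here; the whole argument is essentially bookkeeping with the adjoint relation $p_F^* = i_F$ and the self-adjointness of $A$. The only point that requires a moment of care is remembering that $A_F = p_F A i_F$ (not $p_F A i_{F'}$), so that the cancellation $A_F A_F^{-1} = \id_{\C F}$ inside $\psi$ uses the identity $p_F = p_F \circ i_F \circ p_F$ implicit in viewing $\psi(v)$ inside $\C F \subset \C F'$.
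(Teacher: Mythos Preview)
Your proof is correct. Both your argument and the paper's are direct verifications using the adjoint relation $p_F^* = i_F$ and the self-adjointness of $A$ and $A_F$; the only difference is which characterization of an orthogonal projection is checked. The paper shows that $\psi$ is an idempotent with image $\C F$ (by computing $\psi|_{\C F} = \id$) and then that $\psi$ is self-adjoint for $(\,,\,)_A$ via the observation that $p_{F'} A \psi = p_{F'} A i_F A_F^{-1} p_F A i_{F'}$ is visibly self-adjoint. You instead verify that $\psi$ has range in $\C F$ and that $v - \psi(v) \perp_A \C F$, from which idempotency and self-adjointness follow. The two computations are of the same length and difficulty; neither buys anything the other does not.
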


\begin{proof}
  For $u \in \C F$ we have $\psi (u) = A^{-1}_F A_F u = u$. This implies that $\psi^2 = \psi$ since $\psi$ takes values in $\C F$. Moreover, $\Imm \psi = \C F$. Next observe that
\[
p_{F'} A \psi = p_{F'} A i_F A^{-1}_F p_F A i_{F'}
\]
is selfadjoint since $i^*_F = p_F$ and $A,A_F$ are selfadjoint. Hence we have $p_{F'} A \psi = \psi^* A i_{F'}$ and for $u,v \in \C F'$ therefore:
\[
(\psi u,v)_A = (\psi u , Av) = (u , \psi^* A v) = (u, A \psi v) = (u, \psi v)_A \; .
\]
\end{proof}

By the proposition the endomorphism $\varphi = \id - \psi$ of $\C F'$ is the orthogonal projection to $\C F \; ^{\perp_A}$ with respect to $(,)_A$. Formula \eqref{eq:2} can be rewritten as
\[
\det A_{F'} = \det A_F \det (p_{F' \setminus F} A \varphi i_{F' \setminus F}) \; .
\]

\begin{cor}
  \label{t10}
Assume that $F' = F \dot{\cup} \{ \gamma \}$ and set $\Phi_{\gamma} = \varphi (\gamma)$. Then we have $\det A_{F'} = \| \Phi_{\gamma} \|^2_A \det A_F$.
\end{cor}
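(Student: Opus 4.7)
The plan is to read off the corollary directly from the formula
\[
\det A_{F'} = \det A_F \det (p_{F' \setminus F} A \varphi i_{F' \setminus F})
\]
which was derived from Lemma \ref{t8} in the paragraph preceding the corollary. All that remains is to evaluate the second determinant on the right for the one-element set $F' \setminus F = \{\gamma\}$.

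First I would observe that $\C (F' \setminus F) = \C \gamma$ is one-dimensional, so the endomorphism $p_{\{\gamma\}} A \varphi i_{\{\gamma\}}$ of $\C \gamma$ is just multiplication by a scalar, namely its matrix entry at $\gamma$. Unwinding the definitions, this entry equals the coefficient of $\gamma$ in the expansion of $A \varphi (\gamma) = A \Phi_{\gamma} \in L^2 \Gamma$ with respect to the basis $\Gamma$, i.e.
\[
\det (p_{\{\gamma\}} A \varphi i_{\{\gamma\}}) = (A \Phi_{\gamma} , \gamma) = (\Phi_{\gamma}, \gamma)_A,
\]
where the second equality uses the definition of $(\, ,\,)_A$ and the fact that $A$ is selfadjoint.

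Next I would use the decomposition $\gamma = \varphi (\gamma) + \psi (\gamma) = \Phi_{\gamma} + \psi (\gamma)$ with $\psi (\gamma) \in \C F$. By Proposition \ref{t9}, $\psi$ is the orthogonal projection of $\C F'$ onto $\C F$ with respect to $(\, ,\,)_A$, so $\Phi_{\gamma} \in \C F^{\perp_A}$ and in particular $(\Phi_{\gamma} , \psi (\gamma))_A = 0$. Hence
\[
(\Phi_{\gamma}, \gamma)_A = (\Phi_{\gamma} , \Phi_{\gamma})_A + (\Phi_{\gamma}, \psi (\gamma))_A = \| \Phi_{\gamma} \|^2_A .
\]
Combining this with the opening formula yields $\det A_{F'} = \| \Phi_{\gamma} \|^2_A \det A_F$, as claimed.

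There is no real obstacle here: the corollary is essentially the observation that factoring out the $A_F$-block in the block matrix for $A_{F'}$ leaves a one-dimensional Schur complement, and the theory of orthogonal projection with respect to $(\, ,\,)_A$ identifies that complement as the squared $A$-norm of the $(\, ,\,)_A$-orthogonalized vector $\Phi_{\gamma}$. The only point requiring a little care is keeping the two inner products $(\, ,\,)$ and $(\, ,\,)_A$ straight when identifying the scalar entry of the $1 \times 1$ Schur complement.
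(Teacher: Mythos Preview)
Your proof is correct and follows essentially the same route as the paper: both start from the rewritten formula $\det A_{F'} = \det A_F \det (p_{F' \setminus F} A \varphi i_{F' \setminus F})$, identify the one-dimensional Schur complement as the scalar $(A\Phi_\gamma,\gamma) = (\Phi_\gamma,\gamma)_A$, and then reduce this to $\|\Phi_\gamma\|^2_A$. The only cosmetic difference is in that last reduction: the paper uses the idempotence and $(\,,\,)_A$-selfadjointness of $\varphi$ directly, writing $(\varphi(\gamma),\gamma)_A = (\varphi^2(\gamma),\gamma)_A = (\varphi(\gamma),\varphi(\gamma))_A$, whereas you use the equivalent orthogonal decomposition $\gamma = \Phi_\gamma + \psi(\gamma)$ with $(\Phi_\gamma,\psi(\gamma))_A = 0$.
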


\begin{proof}
  Since $\C (F' \setminus F) = \C \gamma$ is one-dimensional and $\| \gamma \| = 1$, we have
  \begin{eqnarray*}
    \det (p_{F' \setminus F} A \varphi i_{F' \setminus F}) & = & (p_{F' \setminus F} A \varphi (\gamma) , \gamma) = (A \varphi (\gamma) , \gamma) = (\varphi (\gamma) , \gamma)_A \\
 & = & (\varphi^2 (\gamma) , \gamma)_A = (\varphi (\gamma) , \varphi (\gamma))_A = \| \varphi (\gamma) \|^2_A \; .
  \end{eqnarray*}
\end{proof}

The corollary generalizes part of formula (1.5.78) of \cite{Si}. Using this orthogonalization process inductively we get the formula
\[
\det A_F = \prod_{\gamma \in F} \| \Phi_{\gamma} \|^2_A \; .
\]
Concerning the order of $\| \Phi_{\gamma} \|_A$ note the following equations
\begin{eqnarray*}
  \| \Phi_{\gamma} \|^2_A & = & (\varphi (\gamma) , \gamma)_A = (\gamma - i_F A^{-1}_F p_F A \gamma , \gamma)_A \\
 & = & \| \gamma \|^2_A - (i_F A^{-1}_F p_F A \gamma , A\gamma) \\
 & = & \tau (A) - (A^{-1}_F s, s) \; , \; \mbox{where} \; s = p_F A \gamma \in \C F \; .
\end{eqnarray*}
In the situation of corollary \ref{t10} we therefore obtain:

\begin{cor}
  We have $0 < \| \Phi \|^2_A \le \tau (A)$. Moreover the following assertions are equivalent:\\
1) $\| \Phi_{\gamma} \|^2_A = \tau (A)$\\
2) $p_F A\gamma = 0$\\
3) $A_{F'} = \left(
  \begin{smallmatrix}
    A_F &  0 \\ 
0 & c
  \end{smallmatrix} \right)$ for some $c$ (which must be $c = \tau (A)$).
\end{cor}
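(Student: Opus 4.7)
The plan is to take the identity
\[
\| \Phi_\gamma \|^2_A = \tau(A) - (A_F^{-1} s, s), \qquad s = p_F A \gamma \in \C F,
\]
derived in the paragraph immediately preceding the corollary, as the single engine driving every assertion, and to combine it with the fact from Lemma \ref{t3} that $A_F$ (and hence $A_F^{-1}$) is a positive definite automorphism of the finite-dimensional space $\C F$.

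For the bounds $0 < \| \Phi_\gamma \|^2_A \le \tau(A)$, positivity of $A_F^{-1}$ immediately yields $(A_F^{-1} s, s) \ge 0$, which gives the upper bound. For strict positivity, I would apply Lemma \ref{t3} with $F'$ in place of $F$ to conclude that $\det A_{F'} > 0$, and then divide by $\det A_F > 0$ in the formula of Corollary \ref{t10}, namely $\det A_{F'} = \| \Phi_\gamma \|^2_A \det A_F$.

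For the equivalences I would argue in the order $1) \Leftrightarrow 2) \Leftrightarrow 3)$. The implication $1) \Leftrightarrow 2)$ is immediate from the identity: $\| \Phi_\gamma \|^2_A = \tau(A)$ iff $(A_F^{-1} s, s) = 0$ iff $s = 0$, where the last step uses positive definiteness of $A_F^{-1}$ on the finite-dimensional space $\C F$. For $2) \Leftrightarrow 3)$, I would write out the block decomposition of $A_{F'}$ with respect to $\C F' = \C F \oplus \C \gamma$, namely
\[
A_{F'} = \begin{pmatrix} A_F & p_F A \gamma \\ (p_F A \gamma)^* & (A\gamma, \gamma) \end{pmatrix},
\]
using $i_F^* = p_F$ and $A^* = A$ to identify the lower-left corner as the adjoint of the upper-right one. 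The off-diagonal blocks vanish precisely when $p_F A \gamma = 0$, and when they do, the scalar entry is $c = (A\gamma, \gamma)$. To conclude that $c = \tau(A)$ I would invoke the defining left $\Gamma$-equivariance of $A \in \Nh\Gamma$: writing $\lambda_\gamma$ for left translation by $\gamma$, one has $A\gamma = A\lambda_\gamma(e) = \lambda_\gamma(Ae)$, and unitarity of $\lambda_\gamma$ yields $(A\gamma,\gamma) = (Ae,e) = \tau(A)$.

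The whole argument is essentially a one-line manipulation of the preceding identity together with a routine block-matrix inspection, so there is no genuine obstacle. The only point that merits a brief verification is the identification $c = \tau(A)$, which reduces to the standard invariance of the canonical trace on $\Nh\Gamma$ under left translation by any group element.
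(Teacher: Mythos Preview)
Your proposal is correct and matches the paper's intended argument: the paper gives no separate proof for this corollary, presenting it as an immediate consequence of the identity $\|\Phi_\gamma\|_A^2=\tau(A)-(A_F^{-1}s,s)$ and the block form of $A_{F'}$ already displayed, and your write-up fills in precisely those details. The only stylistic remark is that strict positivity can be seen even more directly from $\Phi_\gamma=\gamma-\psi(\gamma)$ with $\psi(\gamma)\in\C F$ (so $\Phi_\gamma\neq 0$) together with positive definiteness of $(\,,\,)_A$, but your route via Corollary~\ref{t10} and Lemma~\ref{t3} is equally valid.
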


Now we generalize a calculation from the theory of orthogonal polynomials on $S^1$ which is used in one of the proofs of theorem \ref{t2}. Recall that for $\Phi \in \Nh\Gamma$ one sets $\| \Phi \|_2 = \tau (\Phi^* \Phi)^{1/2}$. It is known that we have
\begin{equation}
  \label{eq:3}
  \ddet_{\Nh\Gamma} \Phi \le \| \Phi \|_2 \; .
\end{equation}
Namely, let $E_{\lambda}$ be the spectral resolution of $|\Phi|$. Then we have by Jensen's inequality:
\begin{eqnarray*}
  (\ddet_{\Nh\Gamma} \Phi)^2 & = & \exp \int^{\infty}_0 \log (|\lambda|^2) d \tau (E_{\lambda}) \le \int^{\infty}_0 |\lambda|^2 \, d\tau (E_{\lambda}) \\
& = & \tau \Big( \int^{\infty}_0 |\lambda |^2 d E_{\lambda} \Big) = \tau (\Phi^* \Phi) = \| \Phi \|^2_2 \; .
\end{eqnarray*}
For positive $A \in \Nh\Gamma$ and any $\Phi \in \Nh\Gamma$ we find
\begin{eqnarray*}
  (\ddet_{\Nh\Gamma} A)^{1/2} \ddet_{\Nh\Gamma} \Phi & = & \ddet_{\Nh\Gamma} (\sqrt{A} \Phi) \le \| \sqrt{A} \Phi \|_2 \\
  & = & \tau (\Phi^* A \Phi)^{1/2} = (\Phi^* A \Phi e , e)^{1/2} \\
 & = & (A \Phi (e) , \Phi (e))^{1/2} = \| \Phi (e) \|_A \; .
\end{eqnarray*}
Let $\sim : \C \Gamma \to \C\Gamma$ be defined by $\tilde{f} = \sum f_{\gamma} \gamma^{-1}$. Then for $f \in \C\Gamma$ the operator $r (f) \in \Nh\Gamma$ is right multiplication by $\tilde{f}$. For $f \in \C \Gamma \subset \Nh\Gamma$ where the inclusion is via $r$, we get 
\[
(\ddet_{\Nh\Gamma} A)^{1/2} \ddet_{\Nh\Gamma} f \le \| f (e) \|_A = \| \tilde{f} \|_A \; .
\]
Applying this to $f = \tilde{\Phi}_{\gamma}$ we find
\[
(\ddet_{\Nh\Gamma} A)^{1/2} \ddet_{\Nh\Gamma} \tilde{\Phi}_{\gamma} \le \| \Phi_{\gamma} \|_A \; .
\]
Combining this with corollary \ref{t10} we obtain the following result:

\begin{cor}
  \label{t12}
Let $A \in \Nh\Gamma$ be positive, and injective on $\C\Gamma$. Assume that $F$ and $F' = F \dot{\cup} \{ \gamma \}$ are finite subsets of $\Gamma$. Then we have the inequality:
\[
(\ddet_{\Nh\Gamma} A) (\ddet_{\Nh\Gamma} \tilde{\Phi}_{\gamma} )^2 \le \frac{\det A_{F'}}{\det A_F} \; .
\]
\end{cor}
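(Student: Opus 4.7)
The plan is to combine the inequality
$$(\ddet_{\Nh\Gamma} A)^{1/2} \ddet_{\Nh\Gamma} \tilde{\Phi}_{\gamma} \le \| \Phi_{\gamma} \|_A$$
derived in the display immediately preceding the corollary statement with the orthogonalization identity provided by Corollary \ref{t10}. Concretely, I would first square both sides (both quantities are non-negative, since Fuglede--Kadison determinants are by definition exponentials) to obtain
$$(\ddet_{\Nh\Gamma} A)(\ddet_{\Nh\Gamma} \tilde{\Phi}_{\gamma})^2 \le \| \Phi_{\gamma} \|_A^2.$$
Next, because $A$ is positive and injective on $\C\Gamma$, Lemma \ref{t3} guarantees that $A_F$ is a positive automorphism of $\C F$ and in particular $\det A_F > 0$. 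Hence Corollary \ref{t10}, which asserts $\det A_{F'} = \| \Phi_{\gamma} \|_A^2 \det A_F$, may be rearranged to give $\| \Phi_{\gamma} \|_A^2 = \det A_{F'}/\det A_F$. Substituting this into the squared inequality yields the desired bound.

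There is essentially no obstacle remaining at this stage: all the genuine work has already been carried out in the paragraphs preceding the statement. Specifically, the multiplicativity of $\ddet_{\Nh\Gamma}$ applied to the factorization $\sqrt{A}\,\tilde{\Phi}_\gamma$, the Jensen-type bound $\ddet_{\Nh\Gamma} \Phi \le \|\Phi\|_2$ established in \eqref{eq:3}, the trace identity $\tau(\Phi^* A \Phi) = (A\Phi(e), \Phi(e))$, and the translation $r(f)(e) = \tilde{f}$ for $f \in \C\Gamma$ combine to produce the key inequality, which is then applied to $f = \tilde{\Phi}_\gamma$. The corollary is simply the packaging of that chain of estimates with the orthogonalization formula of Corollary \ref{t10}, so the plan reduces to the one-line manipulation described above.
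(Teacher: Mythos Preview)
Your proposal is correct and follows exactly the paper's own approach: the paper simply says ``Combining this with corollary \ref{t10} we obtain the following result,'' where ``this'' is the inequality $(\ddet_{\Nh\Gamma} A)^{1/2} \ddet_{\Nh\Gamma} \tilde{\Phi}_{\gamma} \le \| \Phi_{\gamma} \|_A$ established immediately before the statement. Squaring and substituting $\|\Phi_\gamma\|_A^2 = \det A_{F'}/\det A_F$ from Corollary~\ref{t10} (valid since $\det A_F > 0$ by Lemma~\ref{t3}) is precisely what is intended.
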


\begin{remarks}
  \label{t13}\rm
{\bf a} For $\Gamma \in \Z$ and $F = \{ 0 , \ldots , n-1 \}$ and $F' = \{ 0 , \ldots , n \}$ we have $\gamma = n$ and for $A$ as in example \ref{t5} the Laurent polynomial $\Fh (\tilde{\Phi}_n)$ is closely related to the polynomial $\Phi^*_n (z)$ of \cite{Si} (1.1.1), (1.1.6). In particular, $\Fh (\tilde{\Phi}_n)$ like $\Phi^*_n (z)$ has no zeroes in the interior of the unit disc and hence $\det_{\Nh\Z} \tilde{\Phi}_n = M (\Fh (\tilde{\Phi}_n)) = 1$ by Jensen's formula, c.f. \cite{Si} Theorem 1.7.1 and proof of Corollary 1.7.2. Thus in this case one has
\[
M (\hat{A}) = \ddet_{\Nh\Gamma} A \le \frac{\det A_{F_{n+1}}}{\det A_{F_n}} \; ,
\]
and this inequality is instrumental for the proof of theorem \ref{t2}.\\
{\bf b} For general $\Gamma$ unfortunately we do not know whether $\det_{\Nh\Gamma} \tilde{\Phi}_{\gamma} \ge 1$ or even $\det_{\Nh\Gamma} \tilde{\Phi}_{\gamma} = 1$ holds under suitable conditions.
\end{remarks}
%\newpage
%\input{sec3}
\section{Approximation on the space of marked groups}
\label{sec:3}

According to a theorem of Lawton, Mahler measures of Laurent polynomials in several variables can be approximated by Mahler measures of one-variable Laurent polynomials. His result which we now recall resolved a conjecture of Boyd. For $r \in \Z^d$ set 
\[
q (r) = \min \{ \| \nu \| \tei 0 \neq \nu \in \Z^d \; \mbox{with} \; (\nu , r) = 0 \}
\]
where $\| \nu \| = \max |\nu_i| $ and $(\nu , r) = \sum_i \nu_i r_i$.

\begin{theorem}[Lawton \cite{La}]
  \label{t14}
For\ \;$r \in \Z^d$ and $P$ in $\C [X^{\pm 1}_1 , \ldots , X^{\pm 1}_d ]$ set $P_r (X) = P (X^{r_1} , \ldots , X^{r_d})$. Then we have
\[
\lim_{q (r) \to \infty} M (P_r) = M (P) \; .
\]
\end{theorem}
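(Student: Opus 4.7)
The plan is to combine a Weyl-type equidistribution argument on $T^d$ with a truncation to control the logarithmic singularities of $\log|P|$. Writing $\phi_r \colon S^1 \to T^d$, $z \mapsto (z^{r_1}, \ldots, z^{r_d})$, we have
\[
m(P_r) = \int_{S^1} \log |P \circ \phi_r| \, d\mu,
\]
so the goal is to show this converges to $m(P) = \int_{T^d} \log|P|\, d\mu$ as $q(r) \to \infty$.

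First I would establish the underlying equidistribution. For a Laurent monomial $\chi_\nu(w) = w_1^{\nu_1}\cdots w_d^{\nu_d}$ with $0 \neq \nu \in \Z^d$, one computes $\int_{S^1} \chi_\nu \circ \phi_r \, d\mu = \int_{S^1} z^{(\nu,r)} d\mu$, which vanishes as soon as $(\nu,r)\neq 0$, and hence for every $r$ with $q(r) > \|\nu\|$. By linearity and Stone--Weierstrass, the push-forward $(\phi_r)_*\mu$ converges weakly to Haar measure on $T^d$, so $\int_{S^1} f \circ \phi_r\, d\mu \to \int_{T^d} f\, d\mu$ for every continuous $f$ on $T^d$.

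Next I would deduce the upper bound via truncation. The function $f_\epsilon(w) := \log\max(|P(w)|,\epsilon)$ is continuous on $T^d$ and satisfies $\log|P| \le f_\epsilon$, so
\[
m(P_r) \le \int_{S^1} f_\epsilon \circ \phi_r \, d\mu \longrightarrow \int_{T^d} f_\epsilon \, d\mu
\]
as $q(r) \to \infty$. Monotone convergence as $\epsilon \downarrow 0$ gives $\int_{T^d} f_\epsilon\, d\mu \to m(P)$, hence $\varlimsup_{q(r)\to\infty} m(P_r) \le m(P)$.

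The crucial and hardest step is the matching lower bound, which reduces to a uniform integrability estimate: for every $\delta > 0$ there should exist $\epsilon > 0$ and $R > 0$ such that for all $r$ with $q(r) \ge R$,
\[
\int_{\{z \in S^1 \,:\, |P_r(z)| < \epsilon\}} \bigl|\log|P_r(z)|\bigr|\, d\mu(z) < \delta.
\]
Granting this, combining with equidistribution applied to $f_\epsilon$ yields $\varliminf_{q(r)\to\infty} m(P_r) \ge \int_{T^d} f_\epsilon\, d\mu - \delta$, and letting $\epsilon, \delta \to 0$ closes the argument. The main obstacle is to rule out pathological clustering of zeros of $P_r$ near $S^1$ as the $r_i$ grow. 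Lawton's original strategy singles out one coordinate, writes $P$ as a polynomial in that variable over the ring of Laurent polynomials in the others, and uses explicit bounds on the roots together with Jensen's formula applied fibre-wise; the assumption $q(r)\to\infty$ is precisely what prevents the curve $\phi_r(S^1)$ from being trapped in a proper subtorus on which $P$ could degenerate, and so guarantees the required uniform estimate.
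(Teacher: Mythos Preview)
The paper does not give its own proof of this theorem; it is quoted from Lawton \cite{La} as a known result. What the paper \emph{does} establish, via its von~Neumann-algebraic machinery (Theorem~\ref{t17} and Example~\ref{t20}), is only the easy case in which $P$ has no zeros on $T^d$: then $f \in L^1(\Z^d)^\times$ by Wiener's theorem, the spectra of $r(f^*f)$ and of all $r(f_r^*f_r)$ lie in a fixed compact subinterval of $(0,\infty)$, and Weierstra{\ss} approximation of $\log$ on that interval finishes the argument. For general $P$ the paper explicitly calls a full generalisation ``a challenging problem'' and remarks only that Lawton's proof can be interpreted as a spectral-density estimate near zero.

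Your outline is the correct architecture and is essentially Lawton's own approach: weak convergence of $(\phi_r)_*\mu$ to Haar measure handles continuous test functions, truncation by $f_\epsilon = \log\max(|P|,\epsilon)$ yields the upper bound, and everything hinges on the uniform integrability estimate you display. But you do not prove that estimate; you write ``granting this'' and then paraphrase Lawton's strategy in a single sentence. That step \emph{is} the content of the theorem beyond the nonvanishing case. Lawton's key input is a quantitative lemma to the effect that $\mu\{z \in S^1 : |Q(z)| < t\} \to 0$ as $t \to 0$ \emph{uniformly} over all one-variable polynomials $Q$ of bounded degree with leading coefficient of modulus one, which he then combines with the observation that the degree of $P_r$ in the relevant sense stays bounded as $q(r)\to\infty$. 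Your last paragraph gestures at this mechanism but does not supply it. So as a proof your proposal covers precisely the easy case (by a route more classical than, but parallel to, the paper's) and leaves the hard case as an assumed black box --- which is also exactly where the paper itself stops.
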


If $P$ does not vanish on $T^d$, so that $\log |P_r|$ is continuous on $S^1$ the theorem is much simpler to prove than in general. In the following we will generalize this easy case to a statement on the continuity of the Fuglede--Kadison determinant on the space of marked groups. A full generalization of theorem \ref{t14} in this direction is a challenging problem.

For $d \ge 1$ the space $X_d$ of marked groups on $d$-generators is the set of isomorphism classes $[\Gamma , S]$ of pairs $(\Gamma , S)$ where $\Gamma$ is a discrete group and $S = (s_1 , \ldots , s_d)$ a family of $d$ generators of $\Gamma$. Here repetitions are allowed. Two such pairs $(\Gamma,S)$ and $(\Gamma' , S')$ are called isomorphic if there is an isomorphism $\alpha : \Gamma \iso \Gamma'$ with $\alpha (S) = S'$. The set $X_d$ becomes an ultra-metric space with the distance function
\[
d ([\Gamma_1 , S_1] , [\Gamma_2 , S_2]) = 2^{-N}
\]
where $N \le \infty$ is the largest radius such that the balls of radius $N$ around the origin in the Cayley graphs of $(\Gamma_1 , S_1)$ and $(\Gamma_2 , S_2)$ are isomorphic as oriented, labelled graphs with labels $1 , \ldots , d$ corresponding to the generators. Thus intuitively two marked groups are close to each other if their Cayley graphs around the origin coincide on a big ball. An equivalent metric on $X_d$ is obtained by setting
\[
\delta ([\Gamma_1 , S_1] , [\Gamma_2 , S_2]) = 2^{-M}
\]
if the bijection $S_1 \cong S_2$ induces a bijection of $S_1$- resp.~$S_2$-relations of length less than $M$ and if $M \le \infty$ is maximal with this property. Here an $S$-relation in a group $\Gamma$ is an $S$-word, i.e. a finite string of elements from $S$ and their inverses, whose evalution in $\Gamma$ is equal to $e$. The number of elements in the string defining a word is the length of the word e.g. $s^{-1}_1 s_2 s_1 s^{-1}_3 s_5$ has length $5$.

Much more background on the space of marked groups can be found in \cite{CG} section 2, for example.

\begin{example}
  \label{t15} \rm
With notations as in theorem \ref{t14} consider
\[
(\Gamma , S) = (\Z^d , e_1 , \ldots , e_d) \quad \mbox{and} \quad (\Gamma_r , S_r) = (D (r) \Z , r_1 , \ldots , r_d) 
\]
where $r \in \Z^d$ and $D (r)$ is the greatest common divisor of $r_1 , \ldots , r_d$. Then we have
\[
\lim_{q (r) \to \infty} [\Gamma_r , S_r] = [\Gamma , S] \quad \mbox{in} \; X_d \; .
\]
\end{example}

\begin{proof}
  As $\Gamma_r$ is abelian, an $S_r$-word is a relation in $\Gamma_r$ if and only if $\sum^d_{i=1} \nu_i r_i = 0$ where $\nu_i \in \Z$ is the sum of all exponents $\pm 1$ of $r_i$ in the word. The length of the relation is at least $\| \nu \|$. If a relation $\Rh$ has length less than $q (r)$ it follows that we have $\nu = 0$ and hence $\Rh$ is a relation of commutation. Hence for length less than $q (r)$ the relations in $(\Gamma , S)$ and $(\Gamma_r , S_r)$ are in canonical bijection. Thus we have
\[
\delta ([\Gamma , S] , [\Gamma_r , S_r]) \le 2^{-q (r)}
\]
and the assertion follows.
\end{proof}

\begin{example}
  \label{t16} \rm
Let $\Gamma$ be a countable group and $(K_n)$ a sequence of normal subgroups of $\Gamma$. We write $K_n\to e$ if $e$ is the only element of $\Gamma$ which is contained in infinitely many $K_n$'s. Equivalently, for any finite subset $Q \subset \Gamma$ we have $K_n \cap Q \subset \{ e \}$ for $n$ large enough. \\
Now assume that $\Gamma$ is finitely generated and let $S$ be a finite familiy of generators. Given epimorphisms $\varphi_n : \Gamma \twoheadrightarrow \Gamma_n$ we get finite families of generators $S_n$ in $\Gamma_n$. Setting $d = |S|$, we claim that the limit formula
\[
\lim_{n \to \infty} [\Gamma_n , S_n] = [\Gamma,S] \quad \mbox{in} \; X_d 
\]
is equivalent to $K_n \to e$, where $K_n = \Ker \varphi_n$.
\end{example}

\begin{proof}
  Assume that $K_n \to e$. Let $\Rh_n$ be a relation of length $l$ in $\Gamma_n$ and let $\Rh$ be the corresponding $S$-word in $\Gamma$. The evaluation $\gamma = \ev (\Rh)$ of $\Rh$ in $\Gamma$ lies in $K_n$. Let $Q \subset \Gamma$ be the finite subset of at most $l$-fold products from $S \cup S^{-1}$. In particular $\gamma \in Q$. For $n \ge n (l)$, we have $K_n \cap Q \subset \{ e \}$ since $K_n \to e$. Therefore the relations of length $\le l$ in $\Gamma$ and $\Gamma_n$ are in canonical bijection if $n \ge n (l)$ and hence we have
\[
\delta ([\Gamma_n , S_n] , [\Gamma,S]) \le 2^{-l} \quad \mbox{for} \; n \ge n (l) \; .
\]
For the converse consider an element $\gamma \in \Gamma$ which is contained in infinitely many $K_n$'s. Choose a word $\Wh$ in $\Gamma$ with $\gamma = \ev (\Wh)$ and let $l$ be the length of $\Wh$. By assumption, there are arbitrarily large $n$'s such that $\varphi_n (\Wh)$ is a relation in $\Gamma_n$. But for $n \gg 0$ the relations of length $l$ in $\Gamma_n$ and $\Gamma$ are in bijection. Hence $\Wh$ must be a relation i.e. $\gamma = \ev (\Wh) = e$. 
\end{proof}

In order to state the next result we introduce some notations.

For a homomorphism $\varphi : \Gamma \to\Gamma'$ of discrete groups denote by $\varphi_* : L^1 (\Gamma) \to L^1 (\Gamma')$ the map ``integration along the fibres'' defined by
\[
\varphi_* \Big( \textstyle \sum\limits_{\gamma\in\Gamma} f_{\gamma} \gamma \Big) = \sum\limits_{\gamma \in \Gamma} f_{\gamma} \varphi (\gamma) = \sum\limits_{\gamma' \in \Gamma'} \Big( \sum\limits_{\gamma \in \varphi^{-1} (\gamma')} f_{\gamma} \Big) \gamma' \; .
\]
The map $\varphi_*$ is a homomorphism of Banach $*$-algebras with units and it satisfies the estimate $\| \varphi_* (f) \|_1 \le \| f \|_1$ for all $f \in L^1 (\Gamma)$. 

Recall that we view $L^1 (\Gamma)$ as a subalgebra of $\Nh\Gamma$. Then we have the following result

\begin{theorem}
  \label{t17}
Consider a countable discrete group together with homomorphisms $\varphi_n : \Gamma \to \Gamma_n$. For $f \in L^1 (\Gamma)$ set $f_n = \varphi_{n*} (f) \in L^1 (\Gamma_n)$. Then we have:
\begin{equation} \label{eq:3a}
\ddet_{\Nh\Gamma} f \ge \varlimsup_{n\to\infty} \ddet_{\Nh\Gamma_n} f_n \quad \mbox{if} \; K_n = \Ker \varphi_n \to e \; .
\end{equation}
In case $f \in L^1 (\Gamma)^{\times}$, equality holds:
\begin{equation} \label{eq:3b}
\ddet_{\Nh\Gamma} f = \lim_{n\to\infty} \ddet_{\Nh\Gamma_n} f_n \quad \mbox{if} \; K_n \to e \; .
\end{equation}
\end{theorem}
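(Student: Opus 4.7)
The plan is to reduce both statements to a weak-convergence claim for the spectral measures of $f^*f$ and $f_n^*f_n$. Let $\nu$ and $\nu_n$ denote these probability measures on $\R_{\ge 0}$. Since $\varphi_{n*}$ is contractive on $L^1$ and $\|g\|_{\Nh\Gamma}\le\|g\|_1$, all of them live in the common compact interval $[0,\|f\|_1^2]$. The spectral theorem then gives
\[
\log\ddet_{\Nh\Gamma} f \;=\; \tfrac{1}{2}\int_0^\infty \log\lambda\,d\nu(\lambda),
\]
and analogously for $f_n$, so the task reduces to comparing $\int\log\,d\nu_n$ with $\int\log\,d\nu$.

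First I would establish that $\tau_{\Nh\Gamma_n}(\varphi_{n*}(g))\to\tau_{\Nh\Gamma}(g)$ for every $g\in L^1(\Gamma)$. The difference equals $\sum_{\gamma\in K_n\setminus\{e\}}g_\gamma$; given $\varepsilon>0$, pick a finite $Q\subset\Gamma$ with $\sum_{\gamma\notin Q}|g_\gamma|<\varepsilon$, and for $n$ large the hypothesis $K_n\to e$ forces $K_n\cap Q\subset\{e\}$, which closes the tail estimate. Applying this to $g=(f^*f)^k$, which lies in $L^1(\Gamma)$ because $L^1(\Gamma)$ is a Banach $*$-algebra, and exploiting the homomorphism identity $\varphi_{n*}((f^*f)^k)=(f_n^*f_n)^k$, I obtain convergence of all moments $\int\lambda^k\,d\nu_n\to\int\lambda^k\,d\nu$. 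Since the measures share a common compact support, Stone--Weierstrass upgrades this to weak convergence $\nu_n\to\nu$.

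For the inequality \eqref{eq:3a} I would approximate $\log$ from above by the truncations $L_M(\lambda):=\max(\log\lambda,-M)$, which are continuous and bounded on $[0,\|f\|_1^2]$. Weak convergence gives $\int L_M\,d\nu_n\to\int L_M\,d\nu$ for each $M$, and $\log\le L_M$ supplies
\[
\varlimsup_{n\to\infty}\int\log\,d\nu_n \;\le\; \int L_M\,d\nu.
\]
Letting $M\to\infty$, monotone convergence (with $L_M\downarrow\log$ and $L_M$ bounded above by $\log\|f\|_1^2$) yields $\varlimsup_n\int\log\,d\nu_n\le\int\log\,d\nu$, which is \eqref{eq:3a}.

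For the equality \eqref{eq:3b}, invertibility of $f$ in $L^1(\Gamma)$ furnishes $f^{-1}\in L^1(\Gamma)$, and then $\varphi_{n*}(f^{-1})$ is a two-sided $L^1$-inverse of $f_n$ of norm at most $\|f^{-1}\|_1$. Consequently $\|f_n^{-1}\|_{\Nh\Gamma_n}\le\|f^{-1}\|_1$ for every $n$, so $\nu_n$ and $\nu$ are all supported in the compact subinterval $[\|f^{-1}\|_1^{-2},\|f\|_1^2]$ of $(0,\infty)$. On this interval $\log$ is continuous and bounded, so weak convergence directly upgrades to $\int\log\,d\nu_n\to\int\log\,d\nu$. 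The main obstacle is precisely the behavior at $\lambda=0$ in part (a): without a uniform spectral gap, only an upper-semicontinuity statement survives and the result must be one-sided; the invertibility hypothesis in (b) is exactly what provides a uniform lower bound on the spectrum and removes this obstruction.
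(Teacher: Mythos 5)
Your argument is correct and follows essentially the same route as the paper: the same trace-convergence lemma for $L^1$ elements under $K_n \to e$, the same reduction to $f^*f$ with uniform spectral bounds coming from $\|\varphi_{n*}(g)\|_1 \le \|g\|_1$, a spectral gap from $L^1$-invertibility for the equality, and approximation of $\log$ from above by bounded functions decreasing to it for the one-sided inequality. Your packaging via weak convergence of spectral measures (with truncations $L_M$ in place of the paper's explicit decreasing sequence of polynomials $P_k$) is only a cosmetic reorganization of the same Weierstrass-plus-monotone-convergence mechanism.
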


In particular, using example \ref{t16} we get the following corollary:

\begin{cor}
  \label{t18}
For $[\Gamma , S] \in X_d$, epimorphisms $\varphi_n : \Gamma \twoheadrightarrow \Gamma_n$ and $f \in L^1 (\Gamma)^{\times}$ we have
\[
\ddet_{\Nh\Gamma} f = \lim_{n\to \infty} \ddet_{\Nh\Gamma_n} f_n \quad \mbox{if} \;\; [\Gamma_n , S_n] \to [\Gamma , S] \; \mbox{in} \; X_d \; .
\]
\end{cor}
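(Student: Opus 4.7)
The plan is to reduce Corollary~\ref{t18} to Theorem~\ref{t17} via Example~\ref{t16}, which identifies the convergence $[\Gamma_n, S_n] \to [\Gamma, S]$ in $X_d$ with the condition $K_n := \ker \varphi_n \to e$. Assuming the theorem, its equality case applied to $f \in L^1(\Gamma)^\times$ yields the corollary at once, so the substantive work is a plan for Theorem~\ref{t17}. The strategy I would follow has three layers: (i) trace convergence $\tau_{\Nh\Gamma_n}(P(f_n, f_n^*)) \to \tau_{\Nh\Gamma}(P(f, f^*))$ for every noncommutative polynomial $P$ in two variables, (ii) weak convergence of the associated spectral measures of $f_n^* f_n$, and (iii) integration of $\log$ against these measures, handling the singularity at $0$ with care.

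For (i) I would exploit that $\varphi_{n*}$ is a unital $*$-homomorphism of Banach $*$-algebras: setting $g := P(f, f^*) \in L^1(\Gamma)$ one has $P(f_n, f_n^*) = \varphi_{n*}(g)$ and therefore $\tau_{\Nh\Gamma_n}(P(f_n, f_n^*)) = (\varphi_{n*} g)_e = \sum_{\gamma \in K_n} g_\gamma$. Given $\varepsilon > 0$, choose a finite $Q \subset \Gamma$ with $\sum_{\gamma \notin Q}|g_\gamma| < \varepsilon$; since $K_n \to e$, eventually $K_n \cap Q = \{e\}$, so the displayed sum differs from $g_e = \tau_{\Nh\Gamma}(P(f,f^*))$ by at most $\varepsilon$. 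This is the only step where the hypothesis $K_n \to e$ is used. For (ii), the uniform norm bound $\|r(f_n)\| \le \|f_n\|_1 \le \|f\|_1 =: C$ ensures that the spectral measures $\mu_n$ of $f_n^* f_n$ (with respect to $\tau_{\Nh\Gamma_n}$) and $\mu$ of $f^* f$ are all supported in $[0, C^2]$; Weierstrass approximation then upgrades polynomial trace convergence to weak convergence $\mu_n \to \mu$ on that compact interval.

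For (iii) in the general inequality \eqref{eq:3a}, I would introduce the cutoff $\psi_\varepsilon(t) := \max(\log t, \log \varepsilon)$ for $\varepsilon > 0$, continuous and bounded on $[0, C^2]$, satisfying $\log t \le \psi_\varepsilon(t)$ and $\psi_\varepsilon \searrow \log t$ as $\varepsilon \to 0^+$. Weak convergence yields $\int \log t\, d\mu_n \le \int \psi_\varepsilon\, d\mu_n \to \int \psi_\varepsilon\, d\mu$, and monotone convergence (with integrable dominant $\psi_1$) gives $\int \psi_\varepsilon\, d\mu \to \int \log t\, d\mu \in [-\infty,\infty)$; taking $\varlimsup$ in $n$ and then $\varepsilon \to 0$ proves \eqref{eq:3a}. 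For the equality \eqref{eq:3b} that underlies Corollary~\ref{t18}, invertibility of $f$ in $L^1(\Gamma)$ with inverse $g$ is propagated to $f_n g_n = g_n f_n = e$ with $\|g_n\|_1 \le \|g\|_1$ by the unital $*$-homomorphism $\varphi_{n*}$, giving the uniform operator bound $r(f_n)^* r(f_n) \ge \|g\|_1^{-2}$. Hence all $\mu_n$ and $\mu$ are supported in the compact interval $[\|g\|_1^{-2}, C^2]$ on which $\log$ is continuous, and weak convergence yields $\int \log t\, d\mu_n \to \int \log t\, d\mu$ directly. The main technical obstacle is the log singularity in (iii) for the general inequality, which is only circumvented because $\log$ is upper semicontinuous and approximable from above by continuous functions; for the corollary itself, the uniform spectral lower bound from invertibility in $L^1$ collapses this difficulty into a routine application of weak convergence on a compact set.
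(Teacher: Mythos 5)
Your proposal is correct and follows essentially the same route as the paper: Corollary \ref{t18} is reduced to Theorem \ref{t17} via the equivalence of Example \ref{t16}, and your plan for the theorem (trace convergence of polynomials in $f_n,f_n^*$ via the $K_n\cap Q\subset\{e\}$ argument, uniform spectral bounds from $\|f_n\|_1\le\|f\|_1$ and $\|f_n^{-1}\|_1\le\|f^{-1}\|_1$, then Weierstrass approximation of $\log$ on a compact interval away from $0$ in the invertible case, and approximation of $\log$ from above by continuous/polynomial cutoffs plus monotone convergence for the general inequality) matches the paper's proof step for step. Your reformulation in terms of weak convergence of the spectral measures is only a cosmetic repackaging of the paper's direct manipulation of $\tau(P_k(f_n))$.
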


Let us give two examples:

\begin{example}
  \label{t19} \rm
For any countable residually finite group $\Gamma$ there is a sequence of normal subgroups $K_n$ with finite index such that $K_n \to e$. Set $\Gamma_n = K_n \setminus \Gamma$. Then we have:
\begin{equation}
  \label{eq:4}
  \ddet_{\Nh\Gamma} f = \lim_{n\to \infty} |\det r (f_n)|^{1/|\Gamma_n|} \quad \mbox{for any} \; f \in L^1 (\Gamma)^{\times} \; .
\end{equation}
Note here that $r (f_n) \in \Nh\Gamma_n \subset \End \C \Gamma_n$. This formula follows immediately from theorem \ref{t17} if we note that for a finite group $G$ and an element $h \in \C G = L^1 (G)$ we have:
\[
\ddet_{\Nh G} h = |\det r (h)|^{1/ |G|} \; .
\]
Formula \eqref{eq:4} was used in \cite{DS} to relate the growth rate of periodic points of certain algebraic $\Gamma$-actions to Fuglede--Kadison determinants. For $f$ in $\Z\Gamma \cap L^1 (\Gamma)^{\times}$ formula \eqref{eq:4} is a special case of \cite{Lue1}, Theorem 3.4,3.
\end{example}

\begin{example}
  \label{t20}
\rm Recall the situation of example \ref{t15} and let $P$ be a continuous function on $T^d$ whose Fourier coefficients are absolutely summable. Thus we have $P = \Fh (f)$ for some $f \in L^1 (\Z^d)$. If we assume that $P$ does not vanish in any point of $T^d$ it follows from a theorem of Wiener \cite{Wi} that we have $f \in L^1 (\Z^d)^{\times}$. Define $\varphi_r : \Gamma = \Z^d \to \Gamma_r$ by $\varphi_r (e_i) = r_i$ for $1 \le i \le r$. Corollary \ref{t18} now implies the formula
\[
\ddet_{\Nh\Gamma} f = \lim_{q (r) \to\infty} \ddet_{\Nh\Gamma_r} f_r \; .
\]
Since $\det_{\Nh\Gamma} f = M (P)$ and $\ddet_{\Nh\Gamma_r} f_r = M (P_r)$, we get the limit formula of Lawton's theorem in this (easy) case. 
\end{example}

The theorem of Wiener mentioned above has been generalized to the non-commutative context. The ultimate result is due to Losert \cite{Lo}. It asserts that $L^1 (\Gamma)^{\times} = L^1 (\Gamma) \cap C^* (\Gamma)^{\times}$ if and only if $\Gamma$ is ``symmetric''. Thus for symmetric groups the question of invertibility in $L^1 (\Gamma)$ is reduced to the easier question of invertibility in the $C^*$-algebra $C^* (\Gamma)$. Finitely generated virtually nilpotent discrete groups for example are known to be symmetric \cite{LP}, Corollary 3, and hence we have the following equalities for them
\begin{equation}
  \label{eq:5}
  L^1 (\Gamma)^{\times} = L^1 (\Gamma) \cap C^*_r (\Gamma)^{\times} = L^1 (\Gamma) \cap (\Nh\Gamma)^{\times} \; .
\end{equation}

Note that for amenable groups the $C^*$-algebra and the reduced $C^*$-algebra coincide. The classical Wiener theorem is a special case of \eqref{eq:5}:
\[
L^1 (\Z^d)^{\times} = L^1 (\Z^d) \cap C^0 (T^d)^{\times} = L^1 (\Z^d) \cap L^{\infty} (T^d , \mu)^{\times} \; .
\]
The assumptions in corollary \ref{t18} are more restrictive than in theorem \ref{t17}. The advantage of its formulation lies in the intuition and results about $X_d$ that one may use.

\begin{proofof}
  {\bf theorem \ref{t17}}
First we need a simple result about traces. We claim that for any $f$ in $L^1 (\Gamma)$ and any complex polynomial $P (X)$ we have
\begin{equation}
  \label{eq:6}
  \tau_{\Nh\Gamma} (P (f)) = \lim_{n\to\infty} \tau_{\Nh\Gamma_n} (P (f_n)) \quad \mbox{if} \; K_n \to e \; .
\end{equation}
Since $P (f)$ lies in $L^1 (\Gamma)$ as well, it suffices to prove \eqref{eq:6} for $P (X) = X$. Writing $f = \sum_{\gamma} f_{\gamma} \gamma$ we have $\tau_{\Nh\Gamma} (f) = f_e$ and $\tau_{\Nh\Gamma_n} (f_n) = \sum_{\gamma \in K_n} f_{\gamma}$. Fix $\varepsilon > 0$. Since $f$ is in $L^1 (\Gamma)$, there is a finite subset $Q \subset \Gamma$ with $\sum_{\gamma \in \Gamma \setminus Q} |f_{\gamma}| < \varepsilon$. Because of the assumption $K_n \to e$, there is some $N \ge 1$ such that $K_n \cap Q \subset \{ e \}$ for all $n \ge N$. For $n \ge N$ we therefore get the estimate
\[
|\tau_{\Nh\Gamma} (f) - \tau_{\Nh\Gamma_n} (f_n)| = |f_e - \textstyle \sum\limits_{\gamma \in K_n} f_{\gamma} | \le \sum\limits_{\gamma \in K_n \setminus e} |f_{\gamma}| \le \sum\limits_{\gamma \in  \Gamma\setminus Q} |f_{\gamma}| < \varepsilon \; .
\]
Since $\varepsilon > 0$ was arbitrary, formula \eqref{eq:6} follows.

Next, for any $f \in L^1 (\Gamma)$ we have
\begin{equation}
  \label{eq:7}
  \| r (f_n)\| \le \| f_n \|_1 \le \|f \|_1 \quad \mbox{and} \quad \| r (f) \| \le \| f \|_1
\end{equation}
where $\|\;\|$ is the operator norm (between $L^2$-spaces).

Moreover, if $f \in L^1 (\Gamma)^{\times}$, the relation $(f^{-1})_n = \varphi_n (f^{-1}) = \varphi_n (f)^{-1} = f^{-1}_n$ implies the estimates:
\begin{equation}
  \label{eq:8}
  \| r (f^{-1}_n) \| \le \| f^{-1}_n \|_1 \le \| f^{-1} \|_1 \; .
\end{equation}
Since $2  \det_{\Nh\Gamma} f = \det_{\Nh\Gamma} f^* f$ and
\[
(f^* f)_n = \varphi_n (f^* f) = \varphi_n (f)^* \varphi_n (f) = f^*_n f_n 
\]
we may replace $f$ by $f^* f$ in the assertion of theorem \ref{t17}. Hence we may assume that $f \in L^1 (\Gamma)$ and $f_n \in L^1 (\Gamma_n)$ are positive in $\Nh\Gamma$ resp. $\Nh\Gamma_n$ i.e. that $r (f)$ and $r  (f_n)$ are positive operators. If $f$ is invertible it follows that the spectrum of $r (f)$ is contained in the interval $I = [ \| f^{-1} \|^{-1}_1 , \| f \|_1 ]$. Using the estimates \eqref{eq:7} and \eqref{eq:8} we see that the spectra of $r (f_n)$ lie in $I$ as well for all $n$. Note here that for a positive bounded operator $A$ on a Hilbert space we have $\| A \| = \max_{\lambda \in \sigma (A)} \lambda$. Fix $\varepsilon > 0$. Since $I$ is a compact subinterval of $(0,\infty)$, it follows from the Weierstra{\ss} approximation theorem that there is a polynomial $P (X)$ with $\max_{x\in I} |P (x) - \log x| \le \varepsilon$. Since $\sigma (r (f)) , \sigma (r (f_n))$ lie in $I$ it follows that we have:
\[
\| \log r(f) - P (r (f)) \| \le \varepsilon \quad \mbox{and} \quad \| \log r (f_n) - P (r (f_n)) \| \le \varepsilon \; .
\]
Using the estimate $|\tau_{\Nh\Gamma} A| = |(Ae , e)| \le \| A \|$ for any $A \in \Nh\Gamma$, we obtain:
\[
|\tau_{\Nh\Gamma} (\log r(f)) - \tau_{\Nh\Gamma_n} (\log r (f_n))| \le 2 \varepsilon + |\tau_{\Nh\Gamma} (P (f)) - \tau_{\Nh\Gamma_n} (P (f_n))| \; .
\]
Assertion \eqref{eq:6} now implies formula \eqref{eq:3b} in theorem \ref{t17}. For the proof of \eqref{eq:3a} we can assume as above that $r (f)$ and the $r (f_n)$ are positive operators. The relations \eqref{eq:7} imply that the spectra of $r (f)$ and of all $r (f_n)$ lie in $J = [0 , \| f \|_1]$. Choose a sequence of polynomials $P_k (X) \in \R [X]$. converging pointwise to $\log$ in $J$ and satisfying the inequalities $P_k > P_{k+1} > \log$ in $J$ for all $k$. One may obtain such a sequence $(P_k)$ as follows. The continuous functions $\varphi_k$ on $J$ defined by $\varphi_k (x) = 1/k + \log x$ for $x \ge 1/k$ and by $\varphi_k (x) = 1/k + \log 1/k$ for $0 \le x \le 1/k$ satisfy the inequalities $\varphi_k > \varphi_{k+1} > \log$ in $J$ and converge pointwise to $\log$ in $J$. Setting 
\[
\psi_k = 2^{-1} (\varphi_k + \varphi_{k+1})\quad \mbox{and} \quad \varepsilon_k = \min_{x \in J} (\varphi_k (x) - \varphi_{k+1} (x)) > 0 \; ,
\]
the Weierstra{\ss} approximation theorem provides us with polynomials $P_k$ such that
\[
\max_{x\in J} |\psi_k (x) - P_k (x) | \le \frac{\varepsilon_k}{4} \; .
\]
They satisfiy the estimates $\varphi_k > P_k > \varphi_{k+1}$ for all $k$ and hence have the desired properties. It follows that we have
\begin{equation}
  \label{eq:9}
  \lim_{k\to\infty} \tau_{\Nh\Gamma} (P_k (f)) = \log \ddet_{\Nh\Gamma} f \; .
\end{equation}
To see this, consider the spectral resolution $E_{\lambda}$ of the operator $r (f)$. Then we have by the definition of $\det_{\Nh\Gamma} f$:
\begin{eqnarray*}
  \log \ddet_{\Nh\Gamma} f & = & \lim_{\varepsilon \to 0+} \tau_{\Nh\Gamma} (\log (r (f)+\varepsilon)) = \lim_{\varepsilon \to 0+} \int_J \log (\lambda + \varepsilon) \, d\tau_{\Nh\Gamma} (E_{\lambda}) \\
& \overset{(a)}{=} & \int_J \log \lambda \, d\tau_{\Nh\Gamma} (E_{\lambda}) \\
& \overset{(b)}{=} & \lim_{k\to\infty} \int_J P_k (\lambda) \, d\tau_{\Nh\Gamma} (E_{\lambda}) \\
& = & \lim_{k\to\infty} \tau_{\Nh\Gamma} (P_k (f)) \; .
\end{eqnarray*}
Here equations (a) and (b) hold because of Levi's theorem in integration theory (with respect to the finite measure $d\tau_{\Nh\Gamma} (E_{\lambda})$ on $J$). Noting the estimate
\begin{equation}
  \label{eq:10}
  \tau_{\Nh\Gamma_n} (P_k (f_n)) \ge \tau_{\Nh\Gamma_n} (\log (f_n)) 
\end{equation}
we obtain the relations:
\begin{eqnarray*}
  \log \ddet_{\Nh\Gamma} f & \overset{\eqref{eq:9}}{=} & \lim_{k\to\infty} \tau_{\Nh\Gamma} (P_k (f)) \overset{\eqref{eq:6}}{=} \lim_{k\to\infty} \lim_{n\to\infty} \tau_{\Nh\Gamma_n} (P_k (f_n)) \\
& \overset{\eqref{eq:10}}{\ge} & \varlimsup_{n\to\infty} \tau_{\Nh\Gamma_n} (\log f_n) = \varlimsup_{n\to\infty} \log \ddet_{\Nh\Gamma_n} f_n \; .
\end{eqnarray*}
\end{proofof}

\begin{rem}
  If $f \in L^1 (\Gamma)$ is not invertible the question whether the equality \eqref{eq:3b} still holds becomes much more subtle. In the situation of example \ref{t19}, L\"uck has given a criterion in terms of the asymptotic behaviour near zero of the spectral density function, which is hard to verify however, c.f. \cite{Lue1} theorem 3.4, 3. Note that he discusses a slightly different version of the Fuglede--Kadison determinant where the zero-eigenspace is discarded. If $A \in \Nh\Gamma$ is injective on $L^2 (\Gamma)$ the two versions of the $FK$-determinant agree. Incidentally, for a finitely generated amenable group $\Gamma$, a non-zero divisor $f \in \C\Gamma$ has the property that $r (f)$ is injective on $L^2 (\Gamma)$, see \cite{E}. 
\end{rem}

For $\Gamma = \Z$ and the projections to $\Gamma_n = \Z /n $ the above question is related to the theory of diophantine approximation. This was first noted in ergodic theory because for $f \in \Z [\Z]$ the limit
\[
\lim_{n\to \infty} \log \ddet_{\Nh\Z/n} (f_n) = \lim_{n\to\infty} n^{-1} \log \det (r (f_n)) \quad \mbox{(if it exists)}
\]
is the logarithmic growth rate of the number of periodic points of a toral automorphism with characteristic polynomial $\hat{f} \in \Z [X^{\pm 1}]$. One wanted to know if it is equal to the topological entropy which turns out to be given by $m (\hat{f}) = \log \det_{\Nh\Z} (f)$. Using a theorem of Gelfond this was proved by Lind in \cite{Li} \S\,4. See also \cite{S} Lemma 13.53. On the other hand there are examples of non-invertible $f \in L^1 (\Z)$ with $\hat{f} \in \R [X, X^{-1}]$ a linear polynomial for which formula \eqref{eq:3b} is false, see \cite{Lue2}, Example 13.69.

On the other hand, for the sequence $\varphi_r : \Gamma = \Z^d \to \Gamma_r$ from example \ref{t15} formula \eqref{eq:3b} holds for {\it all} $f \in \C [\Z^d]$ as follows from Lawton's theorem \ref{t14} above. One may interpret his proof as an estimate for the spectral density function of $|f|$ near zero. 

These cases suggest the following problem:

\begin{question}
  \label{t21}
In the situation of theorem \ref{t17} consider $f$ in $\Z\Gamma$. Is it true that we have
\[
\ddet_{\Nh\Gamma} f = \lim_{n\to\infty} \ddet_{\Nh\Gamma_n} f_n \quad \mbox{if} \; K_n \to e
\]
even if $f$ is not invertible in $L^1 (\Gamma)$?
\end{question}

In the rest of this section we extend the previous theory somewhat by replacing the maps $\varphi_n : \Gamma \to \Gamma_n$ by a sequence of ``correspondences''. Thus, we consider discrete groups and homomorphisms
\[
\Gamma \xleftarrow{\varphi} \tilde{\Gamma} \xrightarrow{\varphi_n} \Gamma_n \quad \mbox{with kernels}\; K = \Ker \varphi \; \mbox{and} \; K_n = \Ker \varphi_n \; .
\]
Given $\tilde{f} \in L^1 (\tGamma)$ write $f = \varphi_* (\tf) \in L^1 (\Gamma)$ and $f_n = \varphi_{n*} (\tf) \in L^1 (\Gamma_n)$. We will write $K_n \to K$ if one of the following equivalent conditions holds:

{\bf a} No element $\tgamma \in \tGamma$ is contained in $K \btu K_n$ for infinitely many $n$.\\
{\bf b} For any finite subset $Q \subset \tGamma$ we have $(K \btu K_n) \cap Q = \emptyset$ if $n$ is large enough.

Then theorem \ref{t17} has the following generalization:

\begin{theorem}
  \label{t22}
Consider diagrams of countable groups $\Gamma \xleftarrow{\varphi} \tGamma \xrightarrow{\varphi_n} \Gamma_n$ for $n \ge 1$ as above and fix $\tf \in L^1 (\tGamma)$. Then we have
\begin{equation}
  \label{eq:11}
  \ddet_{\Nh\Gamma} f \ge \varlimsup_{n\to\infty} \ddet_{\Nh\Gamma_n} f_n \quad \mbox{if} \; K_n \to K \; .
\end{equation}
For $\tf \in L^1 (\tGamma)^{\times}$, equality holds:
\begin{equation}
  \label{eq:12}
  \ddet_{\Nh\Gamma} f = \lim_{n\to\infty} \ddet_{\Nh\Gamma_n} f_n \quad \mbox{if} \; K_n \to K \; .
\end{equation}
\end{theorem}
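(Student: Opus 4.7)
The plan is to adapt the proof of theorem \ref{t17} almost verbatim, controlling every quantity uniformly through $\tf \in L^1(\tGamma)$ rather than through $f$ or $f_n$. Since $\varphi_*$ and $\varphi_{n*}$ are unital $*$-homomorphisms of Banach algebras satisfying $\|\varphi_*(g)\|_1 \le \|g\|_1$, for any complex polynomial $P$ one has $P(f) = \varphi_*(P(\tf))$ and $P(f_n) = \varphi_{n*}(P(\tf))$ with $P(\tf) \in L^1(\tGamma)$. If moreover $\tf \in L^1(\tGamma)^{\times}$, then $\varphi_*(\tf^{-1})$ is an $L^1$-inverse of $f$ and similarly for $f_n$, so $\|f^{-1}\|_1, \|f_n^{-1}\|_1 \le \|\tf^{-1}\|_1$ and $\|f\|_1, \|f_n\|_1 \le \|\tf\|_1$. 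These are the uniform analogues of the bounds \eqref{eq:7} and \eqref{eq:8} used in the original argument.

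The only genuinely new ingredient is the trace-convergence statement
\[
\tau_{\Nh\Gamma}(P(f)) = \lim_{n \to \infty} \tau_{\Nh\Gamma_n}(P(f_n)) \quad \text{if } K_n \to K,
\]
for every polynomial $P$. Because $P(\tf) \in L^1(\tGamma)$ it suffices to treat $P(X) = X$. Writing $\tf = \sum_{\tgamma} \tf_{\tgamma} \tgamma$, inspection of the coefficient of the identity after applying $\varphi_*$ and $\varphi_{n*}$ gives $\tau_{\Nh\Gamma}(f) = \sum_{\tgamma \in K} \tf_{\tgamma}$ and $\tau_{\Nh\Gamma_n}(f_n) = \sum_{\tgamma \in K_n} \tf_{\tgamma}$, hence
\[
|\tau_{\Nh\Gamma}(f) - \tau_{\Nh\Gamma_n}(f_n)| \le \sum_{\tgamma \in K \btu K_n} |\tf_{\tgamma}|.
\]
Given $\varepsilon > 0$, choose a finite set $Q \subset \tGamma$ with $\sum_{\tgamma \notin Q} |\tf_{\tgamma}| < \varepsilon$; by formulation \textbf{b} of $K_n \to K$ one has $(K \btu K_n) \cap Q = \emptyset$ for all sufficiently large $n$, and the convergence follows.

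With this extended version of \eqref{eq:6} in place, the rest of the proof of theorem \ref{t17} transfers without change. For \eqref{eq:12}, replace $\tf$ by $\tf^* \tf$ (permissible since $\varphi_*$ is a $*$-homomorphism) to reduce to the case that $r(f)$ and all $r(f_n)$ are positive; their spectra then lie in the fixed compact interval $I = [\|\tf^{-1}\|_1^{-1}, \|\tf\|_1] \subset (0, \infty)$, and uniform Weierstra\ss{} approximation of $\log$ on $I$ combined with the trace convergence above gives the equality. For \eqref{eq:11} the same reduction places the spectra in $J = [0, \|\tf\|_1]$, and the decreasing sequence $P_k \to \log$ constructed in the proof of theorem \ref{t17}, together with Levi's theorem applied to the spectral resolution of $r(f)$ and the iterated limit $\lim_k \lim_n$, yields the inequality. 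I expect no serious obstacle beyond this bookkeeping: the only point that could cause trouble is the trace convergence itself, and the $L^1$-decay of $\tf$ together with condition \textbf{b} for $K_n \to K$ handle it cleanly.
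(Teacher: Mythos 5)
Your proposal is correct and follows essentially the same route as the paper: reduce to the trace statement for $P(X)=X$ via the identity $\tau_{\Nh\Gamma}(f)=\sum_{\tgamma\in K}\tf_{\tgamma}$, bound the discrepancy by $\sum_{\tgamma\in K\btu K_n}|\tf_{\tgamma}|$ using condition \textbf{b}, and control all spectra uniformly through $\|\tf\|_1$ and $\|\tf^{-1}\|_1$ so that the Weierstra{\ss} and monotone-approximation arguments of theorem \ref{t17} carry over unchanged. No gaps.
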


\begin{proof}
  As before one first shows that:
  \begin{equation}
    \label{eq:13}
    \tau_{\Nh\Gamma} (P (f)) = \lim_{n\to\infty} \tau_{\Nh\Gamma_n} (P (f_n)) \quad \mbox{for} \; K_n \to K 
  \end{equation}
whenever $\tf \in L^1 (\tGamma)$ and $P \in \C [X]$. Writing $\tf = \sum_{\tgamma \in \tGamma} a_{\tgamma} \tgamma$ and using the inequality
\[
|\tau_{\Nh\Gamma} (f) - \tau_{\Nh\Gamma_n} (f_n)| \le \textstyle \sum\limits_{\tgamma \in K \btu K_n} |a_{\tgamma}| \; ,
\]
we can argue as in the proof of formula \eqref{eq:6}. 

The rest of the proof is analogous to the one of theorem \ref{t17} if we note that the spectra of $r (f)$ and $r (f_n)$ lie in $[0, \| \tf \|_1]$ for $\tf \in L^1 (\tGamma)$ and in $[\| \tf^{-1} \|^{-1}_1 , \| \tf \|_1]$ if $\tf$ is invertible in $L^1 (\tGamma)$. This follows from the estimates
\[
\| r (f) \| \le \| f \|_1 \le \| \tf \|_1 \quad \mbox{and} \quad \| r (f_n) \|  \le \| f_n \|_1 \le \|\tf \|_1 \quad \mbox{if} \; \tf \in L^1 (\tGamma) 
\]
and similar ones for the inverses of $\tf , f , f_n$ in case $\tf \in L^1 (\Gamma)^{\times}$.
\end{proof}

Next, assume that $\tGamma$ is finitely generated and the maps $\varphi$ and $\varphi_n$ are surjective. A family of generators $\tilde{S}$ of $\tGamma$ gives families of generators $S$ and $S_n$ for $\Gamma$ and $\Gamma_n$. If $d = |\tS|$ one can show as in example \ref{t16} that the condition $K_n \to K$ is equivalent to $[\Gamma_n , S_n] \to [\Gamma, S]$ in $X_d$ for $n\to \infty$. \\
For completeness let us give the argument for the implication needed in the following corollary. Assume that $\tgamma \in \tGamma$ is contained in $K \btu K_n$ for infinitely many $n$. Choose a word $\tWh$ in $\tGamma$ with $\tgamma = \ev (\tWh)$. Via $\varphi_n, \varphi$ we obtain words $\Wh_n$ and $\Wh$ in $\Gamma_n$ resp. $\Gamma$ with $\gamma_n = \ev (\Wh_n)$ and $\gamma = \ev (\Wh)$. By assumption there are infitely many $n$, such that $\Wh$ is a relation in $\Gamma$ but $\Wh_n$ is not a relation in $\Gamma_n$ or vice versa. This is not possible however, since for large $n$ the relations of length $\le l (\tWh)$ in $\Gamma$ and $\Gamma_n$ are in canonical bijection if $[\Gamma_n , S_n] \to [\Gamma , S]$. 

\begin{cor}
  \label{t23}
{\bf a} In the situation above, we have for $\tf \in L^1 (\tGamma)$
\[
\lim_{n\to\infty} \tau_{\Nh\Gamma_n} (f_n) = \tau_{\Nh\Gamma} (f) \quad \mbox{if} \quad  [\Gamma_n , S_n] \to [\Gamma , S] \; \mbox{in} \; X_d \; .
\]
{\bf b} If $\tf$ is invertible in $L^1 (\tGamma)$, we have in addition
\[
\lim_{n\to\infty} \ddet_{\Nh\Gamma_n} (f_n) = \ddet_{\Nh\Gamma} (f) \quad \mbox{if} \quad [\Gamma_n , S_n] \to [\Gamma , S] \; \mbox{in} \; X_d \; .
\]
\end{cor}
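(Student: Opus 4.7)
\textbf{Proof proposal for Corollary \ref{t23}.} The plan is to reduce the corollary to Theorem \ref{t22} by showing that convergence $[\Gamma_n,S_n]\to[\Gamma,S]$ in $X_d$ forces the correspondence-kernel condition $K_n \to K$, at which point \eqref{eq:13} (applied with $P(X)=X$) gives part (a), and \eqref{eq:12} gives part (b).

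The first step is therefore the implication ``$[\Gamma_n,S_n]\to[\Gamma,S]$ in $X_d$ implies $K_n\to K$'', which is precisely the argument sketched in the paragraph preceding the corollary: if some $\tgamma\in\tGamma$ lay in $K\btu K_n$ for infinitely many $n$, pick any $\tS$-word $\tWh$ with $\ev(\tWh)=\tgamma$, and push it forward via $\varphi$ and $\varphi_n$ to obtain words $\Wh$ and $\Wh_n$ of the same length in $\Gamma$ and $\Gamma_n$; for $n$ large enough, $[\Gamma_n,S_n]$ and $[\Gamma,S]$ agree on all relations of length $\le l(\tWh)$, so $\Wh$ is a relation in $\Gamma$ iff $\Wh_n$ is a relation in $\Gamma_n$, contradicting the assumption that $\tgamma$ lies in exactly one of $K$, $K_n$ infinitely often. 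Thus condition \textbf{a} in the definition of $K_n\to K$ holds.

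With $K_n\to K$ established, Theorem \ref{t22} applies directly to $\tf\in L^1(\tGamma)$. For (a), specialize formula \eqref{eq:13} to the polynomial $P(X)=X$ to obtain $\tau_{\Nh\Gamma_n}(f_n)\to\tau_{\Nh\Gamma}(f)$. For (b), the hypothesis that $\tf$ is invertible in $L^1(\tGamma)$ is exactly what \eqref{eq:12} requires, and it yields $\ddet_{\Nh\Gamma_n}(f_n)\to\ddet_{\Nh\Gamma}(f)$.

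Since all real work has been done in Theorem \ref{t22} and the topological lemma relating $X_d$-convergence to $K_n\to K$, there is no genuine obstacle here; the only point one has to be careful about is that the maps $\varphi,\varphi_n$ must be surjective so that $S,S_n$ really generate $\Gamma,\Gamma_n$, and that $\tGamma$ is finitely generated so that $X_d$-convergence makes sense — both are in force by assumption before the corollary.
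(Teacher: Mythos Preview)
Your proposal is correct and follows exactly the paper's own approach: deduce $K_n\to K$ from the $X_d$-convergence via the argument in the paragraph preceding the corollary, then invoke \eqref{eq:13} (with $P(X)=X$) for part {\bf a} and \eqref{eq:12} of Theorem \ref{t22} for part {\bf b}. There is nothing to add.
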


\begin{proof}
  The condition $[\Gamma_n , S_n] \to [\Gamma , S]$ implies that $K_n \to K$ and hence {\bf a} follows from equation \eqref{eq:13} and {\bf b} from theorem \ref{t22}, \eqref{eq:12}.
\end{proof}

\begin{cor}
  \label{t24}
Consider the free group $F_d$ on $d$-generators $g_1 , \ldots , g_d$. For $[\Gamma , S]$ in $X_d$ define an epimorphism $\varphi : F_d \to \Gamma$ by setting $\varphi (g_i) = s_i$ if $S = (s_1 , \ldots , s_d)$.\\
{\bf a} For every $\tf \in L^1 (F_d)$, the following function is continuous:
\[
T (\tf) : X_d \to \C \quad \mbox{defined by} \; T (\tf) [\Gamma , S] = \tau_{\Nh\Gamma} (\varphi (\tf)) \; .
\]
{\bf b} For every $\tf \in L^1 (F_d)^{\times}$, the function
\[
D (\tf) : X_d \to \R^{> 0} \quad \mbox{defined by} \; D (\tf) [\Gamma , S] = \ddet_{\Nh\Gamma} (\varphi (\tf)) 
\]
is continuous.
\end{cor}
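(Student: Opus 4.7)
The plan is to deduce this corollary directly from Corollary \ref{t23} by taking $\tGamma = F_d$ with the canonical family of generators $\tS = (g_1,\ldots,g_d)$, and using the defining epimorphism $\varphi : F_d \twoheadrightarrow \Gamma$ associated to each $[\Gamma,S] \in X_d$.

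First, I would check that $T(\tf)$ and $D(\tf)$ are well defined on the set of isomorphism classes $X_d$. If $\alpha : (\Gamma,S) \iso (\Gamma',S')$ is an isomorphism of marked groups, it extends to an isometric $*$-isomorphism $L^1(\Gamma) \iso L^1(\Gamma')$ which intertwines the two defining epimorphisms from $F_d$, and it induces a trace-preserving isomorphism $\Nh\Gamma \iso \Nh\Gamma'$. Since the Fuglede--Kadison determinant is defined from $\tau$ and the functional calculus, both $\tau_{\Nh\Gamma}(\varphi(\tf))$ and $\ddet_{\Nh\Gamma}(\varphi(\tf))$ depend only on the class $[\Gamma,S]$. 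For part (b), invertibility of $\tf$ in $L^1(F_d)$ forces $\varphi(\tf)$ to be invertible in $L^1(\Gamma)$, because $\varphi_*$ is a unital homomorphism of Banach $*$-algebras, so $D(\tf)$ actually takes values in $\R^{>0}$.

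Since $X_d$ is metric, continuity is equivalent to sequential continuity. So it suffices to fix $[\Gamma,S] \in X_d$ and a sequence $[\Gamma_n,S_n] \to [\Gamma,S]$ in $X_d$, and show that $T(\tf)[\Gamma_n,S_n] \to T(\tf)[\Gamma,S]$, with the analogous statement for $D(\tf)$ when $\tf \in L^1(F_d)^\times$. Let $\varphi : F_d \twoheadrightarrow \Gamma$ and $\varphi_n : F_d \twoheadrightarrow \Gamma_n$ be the epimorphisms sending $g_i$ to $s_i$ and to the $i$-th entry of $S_n$ respectively, and set $f = \varphi_*(\tf)$, $f_n = \varphi_{n*}(\tf)$. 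The generating family $\tS = (g_1,\ldots,g_d)$ of $F_d$ satisfies $\varphi(\tS) = S$ and $\varphi_n(\tS) = S_n$, so this is precisely the setup of Corollary \ref{t23} with the free group $F_d$ in the role of $\tGamma$.

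Now Corollary \ref{t23}(a) applied to this data yields $\tau_{\Nh\Gamma_n}(f_n) \to \tau_{\Nh\Gamma}(f)$, which is exactly $T(\tf)[\Gamma_n,S_n] \to T(\tf)[\Gamma,S]$, proving (a). Similarly, when $\tf \in L^1(F_d)^\times$, Corollary \ref{t23}(b) gives $\ddet_{\Nh\Gamma_n}(f_n) \to \ddet_{\Nh\Gamma}(f)$, which is the convergence $D(\tf)[\Gamma_n,S_n] \to D(\tf)[\Gamma,S]$, proving (b). There is no real obstacle in this argument; the only point requiring attention is checking well-definedness on isomorphism classes and that the hypotheses of Corollary \ref{t23} (finitely generated source group, surjective homomorphisms, invertibility of $\tf$ in part (b)) are met, all of which are immediate from our setup.
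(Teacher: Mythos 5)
Your proposal is correct and is exactly the argument the paper intends (it leaves the deduction of Corollary \ref{t24} from Corollary \ref{t23} implicit): take $\tGamma = F_d$ with $\tS = (g_1,\ldots,g_d)$, use that continuity on the metric space $X_d$ is sequential continuity, and apply Corollary \ref{t23} to the diagram $\Gamma \xleftarrow{\varphi} F_d \xrightarrow{\varphi_n} \Gamma_n$. Your added checks (well-definedness on isomorphism classes, and that $\varphi_*$ being a unital Banach-algebra homomorphism carries $L^1(F_d)^{\times}$ into $L^1(\Gamma)^{\times}$ so that $D(\tf)$ lands in $\R^{>0}$) are appropriate and consistent with the paper.
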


\begin{rems}
  The map $T(\tf)$ depends only on the image of $\tf$ in the quotient of $L^1 (F_d)$ by the subgroup generated by the commutators $[g,h] = gh - hg$. Moreover $D (\tf)$ depends only on the image of $\tf$ in the abelianization of $L^1 (F_d)^{\times}$. Note that assertion {\bf b} is not a formal consequence of {\bf a} since there is no functional calculus in $L^1 (\Gamma)$ allowing us to define the logarithm on all invertible elements of the form $f^* f$.
\end{rems}

\section{Further problems}
\label{sec:4}

For a non-zero polynomial $P$ in $\Z [X^{\pm 1}_1 , \ldots , X^{\pm 1}_d]$ it is well known that the Mahler measure satisfies the inequality $M (P) \ge 1$. In fact $m (P) = \log M (P)$ can be interpreted as the entropy of a suitable $\Z^d$-action and entropies are non-negative, c.f. \cite{LSW}. For discrete groups $\Gamma$ the question whether $\det_{\Nh\Gamma} f \ge 1$ holds for $f \in \Z\Gamma$ has been much studied for the modified version of $\det_{\Nh\Gamma}$ where the zero eigenspace is discarded, c.f. \cite{Lue2} for an overview. If $r (f)$ is injective on $L^2 (\Gamma)$, these results apply to $\det_{\Nh\Gamma} f$ itself. It is known for example that for such $f$ and all residually amenable groups $\Gamma$ we have $\det_{\Nh\Gamma} f \ge 1$. For Mahler measures the polynomials $P$ with $M (P) = 1$ are known by a theorem of Kronecker in the one-variable case and by a result of Schmidt in general, \cite{S}. For them the above mentioned entropy is zero and this is significant for the dynamics. Apart from $\Gamma = \Z^d$ and finite groups $\Gamma$ nothing seems to be known about the following problem:

\begin{question}
  \label{t25}
Given a countable discrete group $\Gamma$, can one characterize the elements $f \in \Z\Gamma$ with $\det_{\Nh\Gamma} f = 1$?
\end{question}

Even the case, where $\Gamma$ is finitely generated and nilpotent would be interesting with the integral Heisenberg group as a starting point. 

The polynomials $P \in \Z [X^{\pm 1}_1 , \ldots , X^{\pm 1}_d]$ with $M (P) = 1$ are either units in $\Z [X^{\pm 1}_1 , \ldots , X^{\pm 1}_d]$ or they have zeros on $T^d$ and hence are not invertible in $L^1$. For $f \in \Z [\Z^d] \cap L^1 (\Z^d)^{\times}$ we therefore have $\det_{\Nh\Z^d} f > 1$ unless $f$ is a unit in $\Z [\Z^d]$. Is the same true in general?

\begin{question}
  \label{t26}
Given a countable discrete group $\Gamma$ and an element $f \in \Z\Gamma \cap L^1 (\Gamma)^{\times}$ which does not have a left inverse in $\Z\Gamma$, is $\det_{\Nh\Gamma} f > 1$?
\end{question}

\begin{rem}
  If $\Gamma$ is residually finite and amenable, the answer is affimative. This was shown in the proof of \cite{DS} Corollary 6.7 by interpreting $\log \det_{\Nh\Gamma} f$ as an entropy and proving that the latter was positive. Note that if $f$ does have a left inverse in $\Z\Gamma$ i.e. $gf = 1$ for some $g \in \Z\Gamma$ we have $(\det_{\Nh\Gamma} g) (\det_{\Nh\Gamma} f) = 1$ which implies that $\det_{\Nh\Gamma} f = 1 = \det_{\Nh\Gamma} g$ if both determinants are $\ge 1$. Incidentally, by a theorem of Kaplansky, $\Nh\Gamma$ and hence also the subrings $\C\Gamma$ and $L^1 (\Gamma)$ are directly finite, i.e. left units are right units and vice versa.
\end{rem}

The last topic we want to mention concerns a continuity property. Answering a question of Schinzel, Boyd proved the following result about the Mahler measure in \cite{B1}:

\begin{theorem}[Boyd]
  \label{t27}
For any Laurent polynomial $P \in \C [X^{\pm 1}_1 , \ldots , X^{\pm 1}_d]$ the function $z \mapsto M (z-P)$ is continuous in $\C$.
\end{theorem}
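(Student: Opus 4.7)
Write $h (z) := m (z - P) = \int_{T^d} \log|z - P (x)| \, d\mu (x)$; since $M (z - P) = \exp h (z)$, it suffices to prove that $h : \C \to [-\infty , \infty)$ is continuous. The constant case $P \equiv c$ gives $h (z) = \log|z-c|$, so assume $P$ non-constant. Then $\mu ( \{ P = w \} ) = 0$ for every $w \in \C$, since $\{ P = w \}$ is a proper real-analytic subvariety of $T^d$.

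My plan is to apply Vitali's convergence theorem. Fix $z_0$ and a compact neighbourhood $K$ of $z_0$; let $(z_n) \subset K$ with $z_n \to z_0$ and put $f_n (x) := \log|z_n - P (x)|$. Then $f_n (x) \to f_0 (x)$ for $\mu$-a.e.\ $x$, and the positive parts satisfy $f_n^+ \le \log^+ (\sup_{z \in K} |z| + \|P\|_\infty)$ uniformly. The non-trivial point is uniform absolute continuity of the negative parts $f_n^-$, which I would extract from the Remez--Lojasiewicz-type estimate
\[
(\ast) \qquad \mu \bigl( \{ x \in T^d : |z - P (x)| < t \} \bigr) \; \le \; C \, t^c \qquad \text{for all } z \in K , \; t > 0 ,
\]
with constants $C, c > 0$ depending only on $K$ and $P$. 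Granting $(\ast)$, a layer-cake computation gives, for any measurable $E \subset T^d$ and any $\Lambda > 0$,
\[
\int_E f_n^- \, d\mu \; \le \; \int_{E \cap \{ |z_n - P| < 1 \}} (-\log|z_n - P|) \, d\mu \; = \; \int_0^\infty \mu \bigl( E \cap \{ |z_n - P| < e^{-\lambda} \} \bigr) \, d\lambda \; \le \; \Lambda \mu (E) + (C/c) e^{-c \Lambda} ;
\]
taking $\Lambda$ large and then $\mu (E)$ small yields uniform absolute continuity. Vitali's theorem then gives $\int f_n \, d\mu \to \int f_0 \, d\mu$, i.e.\ $h (z_n) \to h (z_0)$.

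The main obstacle is the uniform estimate $(\ast)$. It is the content of a classical Remez-type inequality for Laurent polynomials on $T^d$ applied to the family $Q_z := z - P$, whose multi-degree is bounded independently of $z$ (one-dimensional case going back to Nagy, higher-dimensional results by Brudnyi--Ganzburg and others). One convenient form states that for any nonzero Laurent polynomial $Q$ on $T^d$ of multi-degree $\le N$ one has $\mu (\{ |Q| < t \}) \le C (d, N) (t / \|Q\|_\infty)^{c(d,N)}$. To apply it with uniform constants over $z \in K$ one needs $\|Q_z\|_\infty$ to stay in a compact subinterval of $(0, \infty)$. The upper bound is trivial, and the lower bound follows from $\|Q_z\|_\infty \ge \|Q_z\|_2$ combined with the orthogonal decomposition
\[
\|Q_z\|_2^2 \;=\; |z - \hat P_0|^2 + \|P - \hat P_0\|_2^2 \;\ge\; \|P - \hat P_0\|_2^2 \;>\; 0 , \qquad \hat P_0 \;:=\; \int_{T^d} P \, d\mu ,
\]
positivity being equivalent to $P$ being non-constant.
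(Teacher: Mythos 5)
Your argument is sound, but note first that the paper does not prove Theorem \ref{t27} at all: it is quoted from Boyd \cite{B1}, and the text only records that Boyd's proof rests on a coefficient estimate of Mahler which in turn uses Jensen's formula. Your route is therefore genuinely different in organization: you reduce continuity of $z\mapsto m(z-P)$ to uniform integrability of the family $\log|z-P|$, $z\in K$, via Vitali's theorem, and you extract the uniform integrability from the sublevel-set estimate $(\ast)$, the non-degeneracy needed for uniform constants being the elementary orthogonality bound $\|z-P\|_2^2=|z-\hat P_0|^2+\|P-\hat P_0\|_2^2>0$. All the steps you actually carry out are correct: the constant case, the fact that $\mu\{P=w\}=0$ for non-constant $P$, the uniform bound on the positive parts, the layer-cake bound $\Lambda\mu(E)+(C/c)e^{-c\Lambda}$, and the passage from $(\ast)$ to Vitali; moreover your argument in fact yields Boyd's stronger statement that $M$ is continuous on any family of Laurent polynomials of bounded multi-degree whose coefficients are bounded and whose $L^2$-norms stay bounded away from $0$, which is the actual content of \cite{B1}. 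What the two approaches buy: Boyd/Mahler keep everything self-contained by working with explicit coefficient bounds derived from Jensen's formula, whereas you make the measure-theoretic part transparent but concentrate the entire difficulty into the single distributional inequality $(\ast)$ --- which is precisely the kind of estimate Mahler and Lawton obtain from Jensen's formula. So the one genuine weak point is that $(\ast)$ is invoked with only a loose attribution (the classical Brudnyi--Ganzburg theorem, as stated, concerns algebraic polynomials on convex bodies, not trigonometric polynomials on $T^d$). You should either cite a precise multivariate trigonometric Remez-type inequality, or prove $(\ast)$ directly: in one variable it follows by factoring $Q_z$, covering the sublevel set by $n$ small arcs around the zeros, and using Mahler's inequality $\|Q\|_\infty\le L(Q)\le 2^{n}M(Q)$ (Jensen's formula again); the case $d>1$ then follows by slicing and induction on $d$, which is essentially the auxiliary lemma in Lawton's paper \cite{La} underlying Theorem \ref{t14}. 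With such a reference or derivation supplied, your proof is complete.
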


The proof is based on an estimate due to Mahler which in turn uses Jensen's formula.

Thus the question arises whether $\det_{\Nh\Gamma} (z-f)$ is a continuous function of $z \in \C$ for $f$ in $\C\Gamma$. For $A$ in $\Nh\Gamma$ the function $\varphi (z) = \log \det_{\Nh\Gamma} (z-A)$ is a subharmonic function on $\C$ c.f. \cite{Br} and in particular it is upper semicontinuous. For $z \notin \sigma (A)$ (or even for $z$ outside the support of the Brown measure) the function $\varphi (z)$ is easily seen to be continuous. If $\Gamma$ is finite then $\det_{\Nh\Gamma} (z-f) = |\det (z-r (f))|^{1/|\Gamma|}$ is clearly continuous for $z \in \C$. For the discrete Heisenberg group $\Gamma$ one may use formula (4) in \cite{D2} to get examples where $\det_{\Nh\Gamma} (z-f)$ can be expressed in terms of ordinary integrals. In all these cases one obtains a continuous function of $z$ if $f$ is in $\C \Gamma$.
%\bibliographystyle{alpha}
%\bibliography{lit}

\begin{thebibliography}{Boy98b}

\bibitem[BL08]{BL}
David~P. Blecher and Louis~E. Labuschagne.
\newblock Applications of the {F}uglede-{K}adison determinant: {S}zeg\"o's
  theorem and outers for noncommutative {$H\sp p$}.
\newblock {\em Trans. Amer. Math. Soc.}, 360(11):6131--6147, 2008.

\bibitem[Boy98a]{B2}
David~W. Boyd.
\newblock Mahler's measure and special values of {$L$}-functions.
\newblock {\em Experiment. Math.}, 7(1):37--82, 1998.

\bibitem[Boy98b]{B1}
David~W. Boyd.
\newblock Uniform approximation to {M}ahler's measure in several variables.
\newblock {\em Canad. Math. Bull.}, 41(1):125--128, 1998.

\bibitem[Boy02]{B3}
David~W. Boyd.
\newblock Mahler's measure and invariants of hyperbolic manifolds.
\newblock In {\em Number theory for the millennium, {I} ({U}rbana, {IL},
  2000)}, pages 127--143. A K Peters, Natick, MA, 2002.

\bibitem[Bro86]{Br}
L.~G. Brown.
\newblock Lidski\u\i's theorem in the type {${\rm II}$} case.
\newblock In {\em Geometric methods in operator algebras ({K}yoto, 1983)},
  volume 123 of {\em Pitman Res. Notes Math. Ser.}, pages 1--35. Longman Sci.
  Tech., Harlow, 1986.

\bibitem[CG]{CG}
Christophe Champetier and Vincent Guirardel.
\newblock Limit groups as limits of free groups: compactifying the set of free
  groups.
\newblock arXiv:math/0401042.

\bibitem[Den]{D2}
C.~Deninger.
\newblock Determinants on von {N}eumann algebras, {M}ahler measures and
  {L}japunov exponents.
\newblock arXiv:0712.0667.

\bibitem[Den06]{D1}
Christopher Deninger.
\newblock Fuglede-{K}adison determinants and entropy for actions of discrete
  amenable groups.
\newblock {\em J. Amer. Math. Soc.}, 19(3):737--758 (electronic), 2006.

\bibitem[DS07]{DS}
Christopher Deninger and Klaus Schmidt.
\newblock Expansive algebraic actions of discrete residually finite amenable
  groups and their entropy.
\newblock {\em Ergodic Theory Dynam. Systems}, 27(3):769--786, 2007.

\bibitem[Ele03]{E}
G{\'a}bor Elek.
\newblock On the analytic zero divisor conjecture of {L}innell.
\newblock {\em Bull. London Math. Soc.}, 35(2):236--238, 2003.

\bibitem[FK52]{FK}
Bent Fuglede and Richard~V. Kadison.
\newblock Determinant theory in finite factors.
\newblock {\em Ann. of Math. (2)}, 55:520--530, 1952.

\bibitem[HS]{HS}
U.~Haagerup and H.~Schultz.
\newblock Invariant subspaces for operators in a general {II}$_1$-factor.
\newblock ArXiv:math.OA/0611256.

\bibitem[Lal08]{L}
Matilde~N. Lal{\'{\i}}n.
\newblock Mahler measures and computations with regulators.
\newblock {\em J. Number Theory}, 128(5):1231--1271, 2008.

\bibitem[Law83]{La}
Wayne~M. Lawton.
\newblock A problem of {B}oyd concerning geometric means of polynomials.
\newblock {\em J. Number Theory}, 16(3):356--362, 1983.

\bibitem[Lin84]{Li}
D.~A. Lind.
\newblock The entropies of topological {M}arkov shifts and a related class of
  algebraic integers.
\newblock {\em Ergodic Theory Dynam. Systems}, 4(2):283--300, 1984.

\bibitem[Lin05]{Li2}
Douglas Lind.
\newblock Lehmer's problem for compact abelian groups.
\newblock {\em Proc. Amer. Math. Soc.}, 133(5):1411--1416 (electronic), 2005.

\bibitem[Los82]{Lo}
V.~Losert.
\newblock A characterization of groups with the one-sided {W}iener property.
\newblock {\em J. Reine Angew. Math.}, 331:47--57, 1982.

\bibitem[LP79]{LP}
Horst Leptin and Detlev Poguntke.
\newblock Symmetry and nonsymmetry for locally compact groups.
\newblock {\em J. Funct. Anal.}, 33(2):119--134, 1979.

\bibitem[LSW90]{LSW}
Douglas Lind, Klaus Schmidt, and Tom Ward.
\newblock Mahler measure and entropy for commuting automorphisms of compact
  groups.
\newblock {\em Invent. Math.}, 101(3):593--629, 1990.

\bibitem[L{\"u}c94]{Lue1}
W.~L{\"u}ck.
\newblock Approximating {$L\sp 2$}-invariants by their finite-dimensional
  analogues.
\newblock {\em Geom. Funct. Anal.}, 4(4):455--481, 1994.

\bibitem[L{\"u}c02]{Lue2}
Wolfgang L{\"u}ck.
\newblock {\em {$L\sp 2$}-invariants: theory and applications to geometry and
  {$K$}-theory}, volume~44 of {\em Ergebnisse der Mathematik und ihrer
  Grenzgebiete. 3. Folge. A Series of Modern Surveys in Mathematics [Results in
  Mathematics and Related Areas. 3rd Series. A Series of Modern Surveys in
  Mathematics]}.
\newblock Springer-Verlag, Berlin, 2002.

\bibitem[Sch95]{S}
Klaus Schmidt.
\newblock {\em Dynamical systems of algebraic origin}, volume 128 of {\em
  Progress in Mathematics}.
\newblock Birkh\"auser Verlag, Basel, 1995.

\bibitem[Sch01]{Sc}
Thomas Schick.
\newblock {$L\sp 2$}-determinant class and approximation of {$L\sp 2$}-{B}etti
  numbers.
\newblock {\em Trans. Amer. Math. Soc.}, 353(8):3247--3265 (electronic), 2001.

\bibitem[Sim05]{Si}
Barry Simon.
\newblock {\em Orthogonal polynomials on the unit circle. {P}art 1}, volume~54
  of {\em American Mathematical Society Colloquium Publications}.
\newblock American Mathematical Society, Providence, RI, 2005.
\newblock Classical theory.

\bibitem[Sze15]{Sz}
G.~Szeg{\"o}.
\newblock Ein {G}renzwertsatz \"uber die {T}oeplitzschen {D}eterminanten einer
  reellen positiven {F}unktion.
\newblock {\em Math. Ann.}, 76(4):490--503, 1915.

\bibitem[Wie32]{Wi}
Norbert Wiener.
\newblock Tauberian theorems.
\newblock {\em Ann. of Math. (2)}, 33(1):1--100, 1932.

\end{thebibliography}

%\input{lit}
%\input{sec4}
%\input{bib}
\end{document}